\documentclass{article}

\usepackage{makeidx}
\usepackage{graphicx}
\usepackage[english,activeacute]{babel}
\usepackage[latin1]{inputenc}
\usepackage{amsmath}
\usepackage{amsfonts}
\usepackage{amssymb,latexsym}
\usepackage[T1]{fontenc}
\usepackage{array}
\usepackage{booktabs}
\usepackage{graphicx}
\usepackage{color}
\usepackage{todonotes}

\newtheorem{theorem}{Theorem}
\newtheorem{corollary}{Corollary}
\newtheorem{proposition}{Proposition}
\newtheorem{remark}[theorem]{Remark}
\newenvironment{proof}[1][Proof]{\noindent\textbf{#1.} }{\ \rule{0.5em}{0.5em}}

\def\pFq#1#2#3#4#5{%
	{}_{#1}F_{#2}\biggl(\genfrac..{0pt}{}{#3}{#4};#5\biggr)%
}
\begin{document}

\title{Asymptotics of orthogonal polynomials generated by a Geronimus
perturbation of the Laguerre measure}
\author{Alfredo Dea\~no$^{1}$, Edmundo J. Huertas$^{2}$, Pablo Rom\'an$^{3}$ 
\\[2mm]
$^{1}$Departamento de Matem\'aticas\\
Universidad Carlos III de Madrid, Madrid, Spain\\
alfredo.deanho@uc3m.es\\
[3mm] $^{2}$ Dept. de Ingenier\'{i}a Civil: Hidr\'{a}ulica, Energ\'{i}a y Medio Ambiente%
\\
ETSI Caminos, Canales y Puertos\\
Universidad Polit\'{e}cnica de Madrid, Spain\\
ej.huertas.cejudo@upm.es, ehuertasce@gmail.com\\
[3mm] $^{3}$ CIEM, FaMAF, Universidad Nacional de C\'ordoba,\\
C\'ordoba, Argentina\\
roman\symbol{'100}famaf.unc.edu.ar}
\date{\today }
\maketitle

\begin{abstract}
This paper deals with monic orthogonal polynomials generated
by a Geronimus canonical spectral transformation of the Laguerre classical
measure, i.e., 
\begin{equation*}
\frac{1}{x-c}x^{\alpha }e^{-x}dx+N\delta (x-c),
\end{equation*}%
for $x\in[0,\infty)$, $\alpha>-1$, a free parameter $N\in \mathbb{R}_{+}$\ and a
shift $c<0$. We analyze the asymptotic behavior (both strong and relative 
to classical Laguerre polynomials) of these orthogonal polynomials as $n$ tends to infinity. 

\smallskip

\textbf{AMS Subject Classification:} 33C45, 41A60, 33C15

\textbf{Keywords:} Orthogonal polynomials, Canonical spectral
transformations of measures, Asymptotic analysis, Hypergeometric functions. 
\end{abstract}

\section{Introduction}
Let $\mu $ be a positive Borel measure supported on a finite or infinite
interval $E=\mathrm{supp}(\mu )$, such that the convex hull verifies $%
C_{0}(E)=[a,b]\subseteq \mathbb{R}$. In the last years several authors have
studied the so called \textit{canonical spectral transformations} of $\mu $,
which are a way to construct new families of orthogonal polynomials from a
perturbed version of $\mu $. They have been studied from several points of
view, including the corresponding Jacobi matrices (see \cite{BDT-NA10}, \cite%
{Y-BKMS02}) or the Stieltjes functions associated with such a kind of
transformations (see \cite{Z-JCAM97} among others).

Let us introduce the sequences of monic orthogonal polynomials (SMOP in the
sequel) associated with one of the aforesaid canonical transformations,
called the \textit{Geronimus canonical transformation on the real line}. The
basic \textit{Geronimus perturbation} of $\mu $ is defined as%
\begin{equation*}
\frac{1}{x-c}d\mu (x)+N\delta (x-c),  
\end{equation*}%
where $N\in \mathbb{R}_{+}$, $\delta (x-c)$ is the Dirac delta function
located at $x=c$, and the shift of the perturbation verifies $c\not\in E$.
Observe that it is given simultaneously by a rational modification of $\mu $
by a positive linear polynomial whose real zero $c$ is the point of
transformation (also known as the \textit{shift} of the transformation)
jointly with the addition of a Dirac mass at the point of transformation as
well.

This transformation was introduced by Geronimus in his pioneer works \cite%
{G-HNeds40} and \cite{G-ZMOK40} on procedures of constructing new sequences
of orthogonal polynomials from other known sequences families, and it was
also studied by Shohat in a different scheme involving mechanical
quadratures (see \cite{S-TAMS37}). Years later, Maroni (see \cite{M-PMH90})
returned to the problem and gave a first expression of the Geronimus
perturbed orthogonal polynomials in terms of so called co-recursive
polynomials of the classical orthogonal polynomials. More recently, Bueno
and Marcell\'{a}n reinterpreted this perturbation in the framework of the
discrete Darboux transformations (see \cite{BM-LAA04}). In \cite{BDT-NA10}
the authors present a new computational algorithm for computing the
Geronimus transformation with large shifts. In \cite{BHR-LNCS14} the authors
provide sharp limits (and the speed of convergence to them) of the zeros of
the Geronimus perturbed SMOP, and also, when $\mu $ is semi-classical they
obtain the corresponding electrostatic model for the zeros of the Geronimus
perturbed SMOP, showing that they are the electrostatic equilibrium points
of positive unit charges interacting according to a logarithmic potential
under the action of an external field. In \cite{MV-JDEA14} the authors
extend the standard Geronimus transformation to a cubic case. \cite{DM-NA13}
provides a new revision of the Geronimus transformation in terms of
symmetric bilinear forms in order to include certain Sobolev and
Sobolev--type orthogonal polynomials into the scheme of Darboux
transformations. Finally, \cite{DGM-MultGer14} deals with multiple Geronimus
transformations and show that they lead to discrete (non-diagonal) Sobolev
type inner products, and it is shown that every discrete Sobolev inner
product can be obtained as a multiple Geronimus transformation.

In view of the foregoing, this transformation has been extensively studied
in the literature, mainly in analytic and algebraic frameworks. However, to
the best of our knowledge, the asymptotic properties of the family 
of orthogonal polynomials as $n\to\infty$ have not been studied in detail, 
save for the particular case when $N=0$ and the perturbed 
measure is the Laguerre classical measure (see \cite{Fej}).

\section{Laguerre polynomials and functions of the second kind}

The classical Laguerre polynomials $L_{n}^{\alpha}(x)$ are defined as the
polynomials orthogonal with respect to the $\mathrm{L}^{2}$ inner product%
\begin{equation*}
\langle p,q\rangle _{\alpha }=\int_{0}^{\infty }p(x)q(x)x^{\alpha
}e^{-x}dx,\quad \alpha >-1,\quad p,q\in \mathbb{P},
\end{equation*}%
see, among others \cite{Chi78} or \cite{Szego}.

In order to fix notation, we denote by $\widehat{L}_{n}^{\alpha}(x)$ 
the \emph{monic} Laguerre polynomials, so $\widehat{L}_{n}^{\alpha
}(x)=x^{n}+\ldots$. These monic polynomials are
connected to standard Laguerre polynomials $L_{n}^{\alpha}(x)$ by the
formula%
\begin{equation}
\widehat{L}_{n}^{\alpha }(x)=(-1)^{n}n!L_{n}^{\alpha}(x),\quad n\geq 0.
\label{PL}
\end{equation}%
They satisfy a three term recurrence relation that we write in the following
form%
\begin{equation}
x\widehat{L}_n^{\alpha}(x)=\widehat{L}_{n+1}^{\alpha }(x)+\beta _{n}\widehat{%
L}_{n}^{\alpha }(x)+\gamma _{n}\widehat{L}_{n-1}^{\alpha }(x),
\label{TTRRLaguerre}
\end{equation}%
where%
\begin{equation}
\beta _{n}=2n+\alpha +1,\qquad \gamma _{n}=n(n+\alpha ),
\label{bngnLaguerre}
\end{equation}%
and we have initial data $\widehat{L}_{0}^{\alpha }(x)=1$ and $\widehat{L}%
_{1}^{\alpha }(x)=x-\alpha -1$. We will also make use of the $\mathrm{L}^{2}([0,\infty))$ norm of
the monic Laguerre polynomials. Since%
\begin{equation*}
\Vert L_{n}^{\alpha}\Vert _{\alpha }^{2}=\frac{\Gamma (n+\alpha +1)}{%
\Gamma (n+1)},
\end{equation*}%
we have%
\begin{equation*}
\Vert \widehat{L}_{n}^{\alpha }\Vert _{\alpha }^{2}=\Gamma (n+\alpha
+1)\Gamma (n+1).  
\end{equation*}

A second (independent) solution of the recurrence relation (\ref%
{TTRRLaguerre}) is the function of the second kind, obtained via a Stieltjes
transform of the Laguerre polynomials%
\begin{equation}
\widehat{F}_{n}^{\alpha }(z)=\int_{0}^{\infty }\frac{\widehat{L}_{n}^{\alpha
}(t)}{t-z}t^{\alpha }e^{-t}dt,  \label{F}
\end{equation}%
which is an analytic function for $z\in \mathbb{C}\setminus \lbrack 0,\infty
)$. Using the Rodrigues formula for Laguerre polynomials \cite[§18.5(ii)]%
{DLMF} and a standard integral representation, see \cite[Eq.13.4.4]{DLMF}
for instance, it is possible to write $\widehat{F}_{n}^{\alpha }(z)$ in
terms of the confluent hypergeometric function of the second kind, or Kummer 
$U$ function: 
\begin{equation}
\widehat{F}_{n}^{\alpha }(z)=(-1)^{n}n!\Gamma (n+\alpha +1)U(n+1,1-\alpha
,ze^{\pm \pi i}),  \label{defF}
\end{equation}%
with plus sign if $-\pi<\textrm{arg}\, z\leq 0$ and minus sign if $0<\textrm{arg}\, z\leq \pi$. 
This representation will be a key element for all the asymptotic analysis
later on in this manuscript. For more
information about the confluent hypergeometric functions, we refer the
reader for instance to \cite[Chapter 13]{DLMF}.

Let us introduce the following inner product in the linear space $\mathbb{P}$%
\ of polynomials with real coefficients%
\begin{equation}
\langle p,q\rangle _{\nu _{N}}=\int_{0}^{\infty }p(x)q(x)\frac{x^{\alpha}e^{-x}}{%
x-c}dx+N\delta (x-c),  \label{Qn}
\end{equation}%
where $\alpha >-1$, and $N\geq 0$, and $c\in (-\infty ,0)$. Namely, we deal
with a measure that consists of an absolutely continuous part, which is a
rational perturbation of the Laguerre weight on $[0,+\infty )$, plus a Dirac
delta located at point $x=c$: 
\begin{equation*}
d\nu _{N}(x)=\frac{x^{\alpha }e^{-x}}{x-c}dx+N\delta (x-c).  
\end{equation*}%
Equivalently, we will say that $\nu _{N}$\ is a Geronimus perturbation of
the standard Laguerre measure (see \cite{BDT-NA10},\cite{BHR-LNCS14} and the
references therein), and we will denote by $\widehat{Q}_{n}^{\alpha ,c,N}(x)$
the monic orthogonal polynomials with respect to (\ref{Qn}).

As explained in \cite{BHR-LNCS14} (and the references therein), the \emph{%
Laguerre--Geronimus monic orthogonal polynomials} can be written in terms of the
monic Laguerre OPs using the following simple connection formula: 
\begin{equation}
\widehat{Q}_{n}^{\alpha ,c,N}(x)=\widehat{L}_{n}^{\alpha }(x)+\Lambda
_{n}^{N}\widehat{L}_{n-1}^{\alpha }(x),  \label{connection}
\end{equation}%
where the coefficient $\Lambda _{n}^{N}$ depends on $n$, $\alpha $, $c$ and $%
N$. It can be written as 
\begin{equation}
\Lambda _{n}^{N}=-\frac{\Gamma (n+\alpha )\Gamma (n)}{L_{n-1}^{\alpha
}(c)F_{n-1}^{\alpha }(c)-NL_{n-1}^{\alpha }(c)^{2}}-\pi _{n-1}(c),
\label{Lambdan1}
\end{equation}%
where we have defined, for $c\in(-\infty ,0)$, 
\begin{equation}
\pi _{n}(c)=\frac{\widehat{L}_{n+1}^{\alpha }(c)}{\widehat{L}_{n}^{\alpha
}(c)}.  \label{pi_n}
\end{equation}

We observe that in Proposition 1 and Remark 1 in \cite{BHR-LNCS14},
different expressions for $\Lambda _{n}^{N}$ are given, but using identities
for the Laguerre polynomials it is not difficult to obtain (\ref{Lambdan1})
from those.

A particular case is given by $N=0$, where the Dirac delta is not present in
the perturbed measure. Then $\Lambda _{n}^{N}=-r_{n-1}(c)$, where 
\begin{equation}
r_{n}(c)=\frac{\widehat{F}_{n+1}^{\alpha }(c)}{\widehat{F}_{n}^{\alpha }(c)}%
,\quad c\in (-\infty ,0),  \label{r_n}
\end{equation}%
with $F_{-1}^{\alpha }(c)=1$, analogously to \cite[\S 2.4.4]{Gauts04}.

The aim of this paper is to obtain strong and relative asymptotics of the
sequence of OPs $\widehat{Q}_{n}^{\alpha ,c,N}(z)$, for large $n$ and other parameters fixed. 
The simplicity of the connection formula (\ref{connection}) makes it a very
attractive identity to use in conjunction with classical asymptotic
approximations for Laguerre polynomials (such as Perron, Fej\'er or
Mehler-Heine expansions, see \cite[\S 8.22]{Szego}), in order to obtain the
corresponding result for the Laguerre--Geronimus OPs. The only element that is missing
so far in the literature is a study of the asymptotic behavior of the
coefficient $\Lambda _{n}^{N}$. We observe that because of (\ref{Lambdan1}), $\Lambda_n^N$
depends on the ratios $\pi _{n}(c)$ and $r_{n}(c)$.

The structure of the paper is as follows: in Section \ref{Sect_asymp_pi_r}
we obtain large $n$ asymptotic expansions for $\pi _{n-1}(z)$ and $%
r_{n-1}(z)$ in the complex plane, which lead to asymptotic approximations 
for $\Lambda _{n}^{N}$ in Section \ref{Sect_asymp_Ln}. Putting together this result and the
connection formula (\ref{connection}), we obtain in Section \ref%
{Sect_asymp_Q} the strong and relative asymptotics for 
$\widehat{Q}_{n}^{\alpha ,c,N}(z)$ in the complex plane. 
Finally, Section \ref{Sect_asymp_rc} is devoted to the 
study of the coefficients in the three term recurrence relation satisfied by 
$\widehat{Q}_{n}^{\alpha ,c,N}(z)$, and Section \ref{Sect_hypergeom} gives 
a hypergeometric representation for the OPs that could be of independent interest.


\section{Asymptotic expansions for $\protect\pi _{n-1}$ and $r_{n-1}$}
\label{Sect_asymp_pi_r} 

The ratios $\pi _{n-1}(z)$ and $r_{n-1}(z)$ could in
principle be studied using standard techniques for the asymptotic behavior of
solutions of three-term recurrence relations, such as the Perron theorem,
see for instance \cite[§4.3]{GST}. However, since the recurrence
coefficients in (\ref{TTRRLaguerre}) satisfy $\beta_n\sim 2n$ and $%
\gamma_n\sim n^2$ as $n\to\infty$, the theorem is inconclusive about the
existence of minimal and dominant solutions, and it does not give detailed
asymptotic information about the behavior of ratios of solutions. We refer
the reader to \cite[Section 4]{DST2008} for more details.

In this paper we work with strong asymptotics of the Laguerre polynomials
and functions of the second kind directly. For $z$ away from $[0,\infty)$,
the strong asymptotics for the $L_n^{\alpha}(z)$ can be obtained from the
classical expansion due to Perron, see for instance \cite[Theorem 8.22.3]%
{Szego}: 
\begin{equation}
L_{n}^{\alpha}(z)=\frac{1}{2\sqrt{\pi }}e^{z/2}(-z)^{-\frac{\alpha }{2}-%
\frac{1}{4}}n^{\frac{\alpha }{2}-\frac{1}{4}}e^{2\sqrt{-nz}}\left( 1+%
\mathcal{O}(n^{-1/2})\right) ,  \label{Perron_Laguerre}
\end{equation}%
which is valid for fixed $\alpha>-1$ and $z\in \mathbb{C}\setminus \lbrack
0,\infty )$. The fractional powers are assumed to take their principal values, 
with phase between $-\pi$ and $\pi$. In \cite[Theorem 3]{DHM} higher terms in this asymptotic
expansion have been obtained, using a related expansion for confluent
hypergeometric functions due to Buchholz, see also \cite{LopTem} and references
therein. The ratio asymptotics is given in \cite{DHM} as well: 
\begin{eqnarray*}
\frac{L_{n}^{\alpha}(z)}{L_{n-1}^{\alpha}(z)} &=&1+\sqrt{-\frac{z}{n-1}%
}+\frac{2\alpha -2z-1}{4(n-1)}+\mathcal{O}(n^{-3/2})  \notag \\
&=&1+\sqrt{-\frac{z}{n}}+\frac{2\alpha -2z-1}{4n}+\mathcal{O}(n^{-3/2}),
\end{eqnarray*}%
as $n\rightarrow \infty $, for fixed $\alpha $ and $z\in\mathbb{C}%
\setminus[0,\infty)$. Therefore, as a direct consequence, for the monic
polynomials we have 
\begin{equation}
\pi _{n-1}(z)=\frac{\widehat{L}_{n}^{\alpha }(z)}{\widehat{L}_{n-1}^{\alpha
}(z)}=-n\frac{L_{n}^{\alpha }(z)}{L_{n-1}^{\alpha}(z)}=-n-\sqrt{-zn}+%
\frac{2z-2\alpha +1}{4}+\mathcal{O}(n^{-1/2}),  \label{asymppi}
\end{equation}%
as $n\rightarrow \infty $.

Regarding the asymptotic behavior of the functions of the second kind, we
present the following result:

\begin{proposition}
\label{Prop_asympF} Given fixed $c\in (-\infty ,0)$ and $\alpha >-1$, the
functions of the second kind $\widehat{F}_{n}^{\alpha }(z)$, defined by (\ref%
{F}), satisfy 
\begin{equation}
\begin{aligned} \widehat{F}_n^{\alpha}(z)&= (-1)^n \sqrt{\pi
}(-z)^{\frac{\alpha }{2}-\frac{1}{4}}\,e^{-z/2-2\sqrt{-zn}}\,\Gamma (n+\alpha+1
)n^{-\frac{\alpha }{2}-\frac{1}{4}}\\ &\times \left[
e_{0}+\frac{e_{1}}{\sqrt{-zn}}+\frac{e_{2}}{-zn}+\mathcal{O}(n^{-3/2})%
\right] ,\qquad n\rightarrow \infty. \end{aligned}  \label{asympF}
\end{equation}
The expansion is valid for bounded $z\in\mathbb{C}\setminus [0,\infty)$,
with principal values of the power functions. The first few coefficients $%
e_j $ are 
\begin{equation*}
\begin{aligned} e_{0}(\alpha ,z)&=1, \label{eks} \\ e_{1}(\alpha
,z)&=\frac{12\alpha ^{2}-3-24z(1-\alpha )-4z^{2}}{48}, \\ e_{2}(\alpha
,z)&=\frac{16z^{4}+192(1-\alpha )z^{3}+24(20\alpha ^{2}-48\alpha
+13)z^{2}}{4608} \\ &+\frac{144(\alpha -1)(2\alpha +1)(2\alpha
+3)z+9(4\alpha ^{2}-1)(4\alpha ^{2}-9)}{4608}. 
\end{aligned}
\end{equation*}
\end{proposition}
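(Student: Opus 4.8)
The plan is to work directly from the closed-form representation (\ref{defF}), which writes $\widehat{F}_{n}^{\alpha}(z)$ in terms of the Kummer function $U(n+1,1-\alpha,ze^{\pm\pi i})$, and to feed into it a large-$n$ asymptotic expansion of $U$ in which the \emph{first} parameter grows. This is exactly the companion computation to the one carried out for the Laguerre polynomials in \cite{DHM}: there $L_n^{\alpha}(z)$ arises from the Kummer $M$ (i.e. ${}_1F_1$) function and the relevant Bessel comparison function is the modified Bessel function $I_\nu$, which produces the growing exponential $e^{2\sqrt{-nz}}$ of Perron's formula (\ref{Perron_Laguerre}); here the second solution $\widehat{F}_n^{\alpha}$ is the recessive one and corresponds instead to the modified Bessel function $K_\nu$, which is responsible for the decaying exponential $e^{-2\sqrt{-zn}}$ appearing in (\ref{asympF}).

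Concretely, I would first rewrite the Kummer function as a Whittaker function via $W_{\kappa,\mu}(x)=e^{-x/2}x^{\mu+1/2}U(\tfrac12+\mu-\kappa,1+2\mu,x)$, which fixes $\mu=-\alpha/2$ and $\kappa=-(n+(\alpha+1)/2)$, so that the limit $n\to\infty$ becomes the large-$|\kappa|$ regime. I would then apply the Buchholz-type expansion of $W_{\kappa,\mu}$ in terms of modified Bessel functions $K_{2\mu+j}=K_{-\alpha+j}$ of argument asymptotic to $2\sqrt{-zn}$ and with polynomial coefficients (as in \cite{DHM}, \cite{LopTem} and the references therein), evaluated at $x=ze^{\pm\pi i}$. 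Since the Bessel order $-\alpha+j$ is fixed while the argument grows, the second, nested step is to insert the standard large-argument expansion $K_\nu(w)\sim\sqrt{\pi/(2w)}\,e^{-w}(1+\cdots)$ for each occurring Bessel function. Collecting the two expansions into a single series in powers of $(-zn)^{-1/2}$, and simplifying the resulting ratio of Gamma functions against the prefactor $(-1)^n n!\,\Gamma(n+\alpha+1)$ in (\ref{defF}) (using $n!=\Gamma(n+1)$ and Stirling), should reproduce the prefactor $\sqrt{\pi}\,(-z)^{\alpha/2-1/4}e^{-z/2-2\sqrt{-zn}}\,\Gamma(n+\alpha+1)\,n^{-\alpha/2-1/4}$ together with the bracketed series, from which $e_0,e_1,e_2$ are read off.

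The main obstacle is the bookkeeping in this double (parameter-and-argument) expansion. In particular, the exact Bessel argument equals $2\sqrt{-zn}$ only to leading order: the lower-order term in $\kappa=-(n+(\alpha+1)/2)$ forces one to re-expand both the algebraic factors $(\kappa x)^{\pm1/4}$ and the exponential $e^{-2\sqrt{-z(n+(\alpha+1)/2)}}=e^{-2\sqrt{-zn}}\bigl(1+\mathcal{O}(n^{-1/2})\bigr)$ in powers of $n^{-1/2}$, and all of these corrections must be absorbed into the clean series $[e_0+e_1/\sqrt{-zn}+e_2/(-zn)+\cdots]$ with $e^{-2\sqrt{-zn}}$ pulled out front. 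One must keep the Buchholz polynomial coefficients and the large-argument Bessel coefficients to consistent orders and then verify that the spurious cross terms cancel so that exactly the stated $e_1,e_2$ survive.

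A secondary but genuine subtlety is the consistent handling of branches and phases: the two choices $ze^{\pm\pi i}$ according to $\mathrm{sign}(\arg z)$, the principal-value powers $(-z)^{\alpha/2-1/4}$, and the square roots $\sqrt{-zn}$ must all be tracked so that the final formula is single-valued on $\mathbb{C}\setminus[0,\infty)$; confirming that the $\pm$ branch in (\ref{defF}) washes out into the single analytic answer (\ref{asympF}) is where sign errors are most likely. Finally, I would check the leading constant independently against the Casoratian of the two solutions of (\ref{TTRRLaguerre}): multiplying the leading terms of (\ref{Perron_Laguerre}) (after passing to $\widehat{L}_n^{\alpha}=(-1)^n n!\,L_n^{\alpha}$) and of (\ref{asympF}) gives $\widehat{L}_n^{\alpha}(z)\,\widehat{F}_n^{\alpha}(z)\sim \Gamma(n+\alpha+1)\,n!\,\big/\,\bigl(2\sqrt{-zn}\bigr)$, consistent with $\Vert\widehat{L}_n^{\alpha}\Vert_\alpha^2=\Gamma(n+\alpha+1)\Gamma(n+1)$, which validates the overall normalization.
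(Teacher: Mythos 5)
Your proposal follows essentially the same route as the paper's proof: the expansion of $U(n+1,1-\alpha,ze^{\pm\pi i})$ in modified Bessel functions $K_{\nu}$ of large argument $\sim 2\sqrt{-zn}$ that you invoke (via the Whittaker form, calling it Buchholz-type) is precisely the expansion the paper obtains following \cite{Temme81} from the integral representation of the Kummer $U$-function, and your subsequent steps --- large-argument asymptotics of $K_{\nu}$, re-expansion of the exponential and algebraic factors in powers of $n^{-1/2}$, and tracking of the $ze^{\pm\pi i}$ branches --- coincide with the paper's assembly of Proposition \ref{Prop_asympF}. Your closing Casoratian check, $\widehat{L}_{n}^{\alpha}(z)\widehat{F}_{n}^{\alpha}(z)\sim n!\,\Gamma(n+\alpha+1)/\bigl(2\sqrt{-zn}\bigr)$, is a sound normalization test and agrees with the product formula the paper itself uses in the proof of Proposition \ref{Prop_asympL}.
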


We observe that the leading term in this expansion is consistent with the
results in \cite{Fej,MPP}, bearing in mind that we are working with monic
polynomials. We also note that the exponential factor is erroneously
corrected in Proposition 3.2 (a) in the first reference.

As a direct consequence, and using symbolic computation, we have an
asymptotic expansion for the ratio of consecutive functions of the second
kind:

\begin{proposition}
\label{Cor_asympF} As $n\rightarrow \infty $, the ratio asymptotics\ of the
Laguerre functions of the second kind is given by%
\begin{equation*}
r_{n-1}(z)=\frac{\widehat{F}_{n}^{\alpha }(z)}{\widehat{F}_{n-1}^{\alpha }(z)%
}=-n+\sqrt{-zn}+\frac{2z-2\alpha +1}{4}+\mathcal{O}(n^{-1/2}),
\end{equation*}
where $\alpha >-1$, $c\in (-\infty ,0)$ and $z\in\mathbb{C}%
\setminus[0,\infty)$.
\end{proposition}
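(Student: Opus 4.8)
The plan is to insert the strong asymptotic expansion \eqref{asympF} of Proposition \ref{Prop_asympF} into both the numerator $\widehat{F}_n^{\alpha}(z)$ and the denominator $\widehat{F}_{n-1}^{\alpha}(z)$ of the ratio $r_{n-1}(z)$, and then expand the resulting quotient as $n\to\infty$. Writing out both expansions, the $n$-independent factors $\sqrt{\pi}\,(-z)^{\alpha/2-1/4}$ and $e^{-z/2}$ cancel immediately, while the genuinely $n$-dependent pieces split into: the sign ratio $(-1)^n/(-1)^{n-1}=-1$; the Gamma ratio $\Gamma(n+\alpha+1)/\Gamma(n+\alpha)=n+\alpha$; the exponential ratio $E=\exp\big(-2\sqrt{-z}\,(\sqrt{n}-\sqrt{n-1})\big)$; the power ratio $P=\big(n/(n-1)\big)^{-\alpha/2-1/4}$; and the ratio $R$ of the two bracketed series. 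Thus $r_{n-1}(z)=-(n+\alpha)\,E\,P\,R$, and it remains to expand $E$, $P$ and $R$.

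For the first two factors I would use $\sqrt{n}-\sqrt{n-1}=\tfrac12 n^{-1/2}+O(n^{-3/2})$ to obtain $E=1-\sqrt{-z}\,n^{-1/2}-\tfrac{z}{2}\,n^{-1}+O(n^{-3/2})$ (recall $(\sqrt{-z})^2=-z$), and the binomial expansion of $(1-1/n)^{\alpha/2+1/4}$ to obtain $P=1-\tfrac{2\alpha+1}{4}\,n^{-1}+O(n^{-2})$, which carries no half-integer powers. The key structural observation concerns $R$: since the coefficients $e_j(\alpha,z)$ in \eqref{asympF} are independent of $n$, the bracket evaluated at $n$ and at $n-1$ agree through order $n^{-1}$, the only discrepancy (coming from $(n-1)^{-1/2}=n^{-1/2}+\tfrac12 n^{-3/2}+\dots$) being of the same order $n^{-3/2}$ as the remainder already present. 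Hence $R=1+O(n^{-3/2})$, and neither $e_1$ nor $e_2$ influences the ratio to the order claimed.

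Finally I would multiply $-(n+\alpha)$ by the product $EP$ (with $R=1+O(n^{-3/2})$ absorbed) and collect by powers of $n$, writing $EP=1+a_1 n^{-1/2}+a_2 n^{-1}+O(n^{-3/2})$ with $a_1=-\sqrt{-z}$ and $a_2=-\tfrac{z}{2}-\tfrac{2\alpha+1}{4}$. The factor $-(n+\alpha)$ raises each term by one order in $n$, so the $n^{-3/2}$ remainder (including the entire $R$-correction) contributes only at $O(n^{-1/2})$, which is the source of the stated error. The $n^{1}$ term gives $-n$; the $n^{1/2}$ term gives $-a_1 n^{1/2}=\sqrt{-zn}$; and the constant term combines $-\alpha$ (from $-(n+\alpha)\cdot 1$) with $-a_2$, yielding $-\alpha+\tfrac{z}{2}+\tfrac{2\alpha+1}{4}=\tfrac{2z-2\alpha+1}{4}$, exactly the claimed value.

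The main obstacle is purely the order bookkeeping: one must carry $E$ and $P$ to order $n^{-1}$ (but no further), recognize that $R$ and the $n^{-3/2}$ tail become negligible only after multiplication by the $O(n)$ prefactor, and in particular combine the several $\alpha$-dependent contributions so that the constant carries $-2\alpha$ rather than $+2\alpha$; it is precisely the cancellation of $-\alpha$ against $+\tfrac{2\alpha+1}{4}$ that fixes this sign. Because the intermediate expansions are error-prone by hand, confirming the collected coefficients with symbolic computation (as the authors note) is the natural safeguard.
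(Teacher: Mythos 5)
Your proposal is correct and takes essentially the same approach as the paper: the authors obtain Proposition \ref{Cor_asympF} precisely by dividing the expansion \eqref{asympF} of Proposition \ref{Prop_asympF} at indices $n$ and $n-1$ and re-expanding in inverse powers of $n$, relegating the bookkeeping to symbolic computation. Your explicit hand computation of the sign, Gamma, exponential, power, and bracket-ratio factors (with $R=1+\mathcal{O}(n^{-3/2})$ and the constant term $\tfrac{2z-2\alpha+1}{4}$) simply carries out in detail what the paper delegates to the computer.
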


We remark that division by $\widehat{F}_{n-1}^{\alpha }(z)$ is allowed for $z$ away
from the positive real axis, bearing in mind (\ref{defF}) and the fact that
the Kummer function $U(a,b,z)$ does not have zeros for $|\arg\, z|<\pi$ if $%
a $ is positive, see \cite[\S 13.9]{DLMF}.

In order to get the previous results, there are at least two possibilities:
use the expression of $\widehat{F}_{n}^{\alpha }(c)$ in terms of Kummer functions, see
(\ref{defF}), or deduce its asymptotic behavior from the Deift--Zhou
steepest descent method applied to the corresponding Riemann--Hilbert
problem, see the work of Vanlessen \cite{V-CA07} and the monograph by Deift 
\cite{Deift}. In the sequel, we elaborate on the first approach.

Following the ideas exposed in \cite{Temme81}, we use the integral
representation for the Kummer $U$-function: 
\begin{equation*}
U(a,b,z)=\frac{1}{\Gamma(a)}\int_0^{\infty} e^{-zt} t^{a-1}(1+t)^{b-a-1}dt,
\end{equation*}
which holds for $\text{Re}\,a,\text{Re}\, z>0$, see also \cite[13.4.4]{DLMF}. 
The transformation $t/(1+t)=e^{-\tau}$ gives 
\begin{equation*}
U(a,b,z)=\frac{e^{z/2}}{\Gamma(a)}\int_0^{\infty} e^{-a\tau-z/\tau}
\tau^{-b}f(\tau)d\tau,
\end{equation*}
where 
\begin{equation*}
f(\tau)=e^{z\mu(\tau)}\left(\frac{\tau}{1-e^{-\tau}}\right)^b, \qquad
\mu(\tau)=\frac{1}{\tau}-\frac{1}{e^{\tau}-1}-\frac{1}{2}.
\end{equation*}

The function $f(\tau)$ is analytic for $|\tau|<2\pi$, and therefore it
admits a power series expansion around the origin of the form 
\begin{equation}  \label{seriesf}
f(\tau)=\sum_{m=0}^{\infty} d_m(b,z)\tau^m.
\end{equation}

Integration term by term, invoking the classical Watson lemma, \cite{Olver,
Temme}, gives an expansion of the $U$-function of the form 
\begin{equation*}  
U(a,b,z)=\sum_{m=0}^{M-1} d_m(b,z)\phi_m(a,b,z)+R_M(a,b,z),
\end{equation*}
where the asymptotic sequence is 
\begin{equation*}  
\phi_m(a,b,z)=\frac{2\,e^{z/2}}{\Gamma(a)}\left(\frac{z}{a}\right)^{\frac{%
m+1-b}{2}}K_{m+1-b}(2(az)^{1/2}),
\end{equation*}
in terms of modified Bessel functions, using the fact that 
\begin{equation*}
K_{\nu}(2(z\zeta)^{1/2})=\frac{1}{2}\left(\frac{\zeta}{z}\right)^{-\nu/2}%
\int_0^{\infty}e^{-z\tau-\zeta/\tau} \tau^{-\nu-1}d\tau,
\end{equation*}
valid for $\text{Re}\,z,\text{Re}\, \zeta>0$. Also, we have 
$R_M(a,b,z)=\mathcal{O}(\phi_M(a,b,z))$ as $a\to\infty$, uniformly with respect to $z$ in compact
sets in $z\geq 0$ and uniformly with respect to $b$ in compact sets of $%
\mathbb{R}$. We assume, following \cite[\S 2.1]{Temme81} that $M$ is large
enough, in particular $M>b$.

If we replace $b=1-\alpha$, symbolic computation gives%
\begin{equation*}
\begin{aligned} d_{0}(\alpha,z)&=1, \label{cn} \\
d_{1}(\alpha,z)&=\frac{6(1-\alpha )-z}{12}, \\
d_{2}(\alpha,z)&=\frac{z^{2}-12(1-\alpha )z+12(\alpha -1)(3\alpha
-2)}{288}, \\ d_{3}(\alpha,z)&=\frac{-5z^{3}-90(1-\alpha )z^{2}-36(15\alpha
^{2}+25\alpha +8)z-1080\alpha (\alpha -1)^{2}}{51840}, 
\end{aligned}
\end{equation*}%
and so on, for the coefficients in the series expansion \eqref{seriesf}.
Replacing $a$ by $n+1$ in the asymptotic sequence, we obtain 
\begin{equation}
\phi _{m}(n,\alpha ,z)=\frac{2\,e^{z/2}}{n!}\left(\frac{z}{n+1}\right) ^{%
\frac{m+\alpha }{2}}K_{m+\alpha }(2((n+1)z)^{1/2}).  \label{phim2}
\end{equation}%
As a consequence, using (\ref{defF}) and (\ref{phim2}), we have 
\begin{equation*}
\begin{aligned} 
\widehat{F}_{n}^{\alpha}(z)
&=(-1)^{n}n!\,\Gamma (n+\alpha+1)\,U(n+1,1-\alpha ,ze^{\pm \pi i}) \\ 
&=2e^{-z/2}(-1)^{n}\Gamma (n+\alpha +1)\left[ S_{M}(n,\alpha
,ze^{\pm \pi i})+R_{M}(n,\alpha ,ze^{\pm \pi i})\right] , 
\end{aligned}
\end{equation*}
as $n\rightarrow \infty $, where 
\begin{equation*}
S_{M}(n,\alpha ,ze^{\pm \pi i})=\sum_{m=0}^{M-1}d_{m}(\alpha ,-z)\left( \frac{ze^{\pm \pi i}}{n+1}%
\right) ^{\frac{m+\alpha }{2}}K_{m+\alpha }(2((n+1)ze^{\pm \pi i})^{1/2}),  
\end{equation*}%
and $R_{M}(n,\alpha ,ze^{\pm \pi i})$ is the remainder. Once again, we take plus sign 
if $-\pi<\textrm{arg}\, z\leq 0$ and minus sign if $0<\textrm{arg}\, z\leq \pi$. 

It is possible
to re-expand this asymptotic series in inverse powers of $n$: using the
asymptotics of the modified Bessel functions for large values of the
argument and fixed order $\nu$, see \cite[10.40.2]{DLMF}, we have 
\begin{equation*}
K_{\nu }(z)\sim \left( \frac{\pi }{2z}\right) ^{1/2}e^{-z}\sum_{\ell
=0}^{\infty }\frac{a_{\ell }(\nu )}{z^{\ell }},\qquad z\rightarrow \infty ,
\end{equation*}%
where $a_{0}(\nu )=1$ and for $\ell \geq 1$,%
\begin{equation*}
a_{\ell }(\nu )=\frac{(4\nu ^{2}-1)(4\nu ^{2}-3^{2})\cdots (4\nu ^{2}-(2\ell
-1)^{2})}{8^{\ell }\ell !}.  
\end{equation*}%
So, if we denote $s_n=2((n+1)ze^{\pm \pi i})^{1/2}$, we have 
\begin{equation*}
K_{m+\alpha }(s_{n})\sim \left( \frac{\pi }{2s_{n}}\right)
^{1/2}e^{-s_{n}}\sum_{\ell =0}^{\infty }\frac{a_{\ell }(m+\alpha )}{%
s_{n}^{\ell }}.
\end{equation*}%
Note that $s_{n}=2\sqrt{nze^{\pm \pi i}}\,(1+\mathcal{O}(n^{-1}))$ as $n\rightarrow
\infty $. Assembling all the previous results and expanding in inverse
powers of $n$, we arrive at Proposition \ref{Prop_asympF}. Proposition \ref%
{Cor_asympF} follows then from this result, using symbolic computation to
manipulate the asymptotic expansions.


\section{Asymptotic behavior of $\Lambda _{n}^{N}$}
\label{Sect_asymp_Ln}

The main result of this section is the following:

\begin{proposition}
\label{Prop_asympL} Let $c\in (-\infty ,0)$, $\alpha >-1$ and $N\geq 0$ be
fixed parameters, then 
\begin{equation}
\Lambda _{n}^{N}=n+\sqrt{-cn}+\frac{2\alpha -2c-1}{4}+\mathcal{O}%
(n^{-1/2}),\qquad n\rightarrow \infty ,  \label{AsympLn}
\end{equation}%
if $N>0$, and 
\begin{equation}
\Lambda _{n}^{0}=-r_{n-1}=n-\sqrt{-cn}+\frac{2\alpha -2c-1}{4}+\mathcal{O}%
(n^{-1/2}),\qquad n\rightarrow \infty. \label{AsympLn0}
\end{equation}
\end{proposition}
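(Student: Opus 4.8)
The plan is to derive the asymptotics of $\Lambda_n^N$ directly from its closed form \eqref{Lambdan1} by substituting the known asymptotic expansions for the Laguerre polynomials, the functions of the second kind, and their ratios. Recall that
\begin{equation*}
\Lambda_n^N = -\frac{\Gamma(n+\alpha)\Gamma(n)}{L_{n-1}^{\alpha}(c)F_{n-1}^{\alpha}(c) - N L_{n-1}^{\alpha}(c)^2} - \pi_{n-1}(c),
\end{equation*}
so the two cases $N>0$ and $N=0$ will need to be treated separately because the denominator behaves differently depending on whether the $N$-term dominates. The asymptotics of $\pi_{n-1}(c)$ is already given in \eqref{asymppi}, and this immediately contributes the terms $-n-\sqrt{-cn}+\tfrac{2c-2\alpha+1}{4}$. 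Since the claimed answers both contain $+n+\tfrac{2\alpha-2c-1}{4}$ (i.e.\ exactly $-\pi_{n-1}$ up to the $\sqrt{-cn}$ term), the bulk of the work lies in showing that the first fraction contributes $+2\sqrt{-cn}$ when $N>0$ and $0$ (to the relevant order) when $N=0$; the $\mp\sqrt{-cn}$ in the final statements then arises from combining with $-\pi_{n-1}$.

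First I would treat the case $N=0$, which is the cleaner one. Here \eqref{Lambdan1} collapses to $\Lambda_n^0 = -r_{n-1}(c)$ (as already noted in the text after \eqref{r_n}), so the result is an \emph{immediate} consequence of Proposition~\ref{Cor_asympF}: negating the expansion
\begin{equation*}
r_{n-1}(z) = -n + \sqrt{-zn} + \frac{2z-2\alpha+1}{4} + \mathcal{O}(n^{-1/2})
\end{equation*}
and evaluating at $z=c$ gives exactly \eqref{AsympLn0}. This requires no new computation beyond citing the earlier proposition.

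For $N>0$ I would analyze the denominator $D_n := L_{n-1}^{\alpha}(c)F_{n-1}^{\alpha}(c) - N L_{n-1}^{\alpha}(c)^2$. Using the monic-polynomial relations \eqref{PL} and \eqref{defF} together with Perron's expansion \eqref{Perron_Laguerre} and Proposition~\ref{Prop_asympF}, both $L_{n-1}^{\alpha}(c)^2$ and the product $L_{n-1}^{\alpha}(c)F_{n-1}^{\alpha}(c)$ can be expanded asymptotically. The key structural observation is that the exponential factors $e^{\pm 2\sqrt{-cn}}$ carried by $L_{n-1}^{\alpha}$ (growing) and $\widehat{F}_{n-1}^{\alpha}$ (decaying) cancel in the cross term $L_{n-1}^{\alpha}(c)F_{n-1}^{\alpha}(c)$, whereas the squared term $L_{n-1}^{\alpha}(c)^2$ carries $e^{+4\sqrt{-cn}}$ and therefore \emph{dominates} the denominator when $N>0$. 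Consequently, to leading order, $D_n \approx -N L_{n-1}^{\alpha}(c)^2$, and dividing $\Gamma(n+\alpha)\Gamma(n)$ by this (after converting back to monic normalization) gives a contribution whose leading behavior I expect to produce the $2\sqrt{-cn}$ correction. I would then carefully track the subleading terms to confirm the constant $\tfrac{2\alpha-2c-1}{4}$ and the $\mathcal{O}(n^{-1/2})$ error.

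The main obstacle will be the bookkeeping in the $N>0$ case: one must keep enough terms in each factor of $D_n$ (and in the Gamma-function ratio, via Stirling) to resolve the order-one constant correctly, while ensuring that the subdominant cross term $L_{n-1}^{\alpha}F_{n-1}^{\alpha}$, being smaller by a factor of order $e^{-4\sqrt{-cn}}$, genuinely contributes nothing to the stated expansion. The cancellation of exponentials in the cross term and the dominance argument are the conceptual crux; after that, the computation is routine expansion in inverse powers of $\sqrt{n}$, best carried out with symbolic computation as the authors do elsewhere. I would also verify consistency by checking that the $N>0$ and $N=0$ answers differ only in the sign of the $\sqrt{-cn}$ term, which reflects exactly the switch between the dominant-solution ($L_n^{\alpha}$) and minimal-solution ($\widehat{F}_n^{\alpha}$) regimes.
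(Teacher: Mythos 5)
Your overall plan coincides with the paper's proof: substitute the expansions \eqref{Perron_Laguerre}, \eqref{asympF} and \eqref{asymppi} into \eqref{Lambdan1}, and exploit the fact that the exponential factors cancel in the product $\widehat{L}_{n-1}^{\alpha}(c)\widehat{F}_{n-1}^{\alpha}(c)$ but not in $\widehat{L}_{n-1}^{\alpha}(c)^{2}$. Your treatment of $N=0$ (via $\Lambda_n^0=-r_{n-1}(c)$ and Proposition \ref{Cor_asympF}) is correct, and is even a slightly cleaner route than the paper's unified computation. The genuine error is in the main case $N>0$, where your bookkeeping is inverted. In the monic normalization used in the paper's proof one has $\widehat{L}_{n-1}^{\alpha}(c)\widehat{F}_{n-1}^{\alpha}(c)=\tfrac{1}{2\sqrt{-cn}}\Gamma(n)\Gamma(n+\alpha)\left(1+\mathcal{O}(n^{-1})\right)$, while $\widehat{L}_{n-1}^{\alpha}(c)^{2}$ equals a positive constant times $\tfrac{1}{2\sqrt{-cn}}\Gamma(n)\Gamma(n+\alpha)\,e^{4\sqrt{-cn}}\left(1+\mathcal{O}(n^{-1/2})\right)$. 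Hence for $N>0$ the denominator in \eqref{Lambdan1} is exponentially large compared with $\Gamma(n+\alpha)\Gamma(n)$, so the fraction is $\mathcal{O}\left(\sqrt{n}\,e^{-4\sqrt{-cn}}\right)$: it is exponentially small and cannot ``produce the $2\sqrt{-cn}$ correction'' you expect. In this case the entire expansion \eqref{AsympLn}, including the $+\sqrt{-cn}$ term, comes from $-\pi_{n-1}(c)=n+\sqrt{-cn}+\tfrac{2\alpha-2c-1}{4}+\mathcal{O}(n^{-1/2})$. Conversely, when $N=0$ the fraction is \emph{not} negligible: it equals $-2\sqrt{-cn}\left(1+\mathcal{O}(n^{-1})\right)$, and this is precisely what flips $+\sqrt{-cn}$ into $-\sqrt{-cn}$ in \eqref{AsympLn0}.

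Put differently, your claimed contributions ($+2\sqrt{-cn}$ for $N>0$, $0$ for $N=0$) are incompatible with the value of $\pi_{n-1}(c)$ that you yourself quote from \eqref{asymppi}: with that value, your scheme gives $\Lambda_n^N=n+3\sqrt{-cn}+\dots$ for $N>0$, contradicting \eqref{AsympLn}; it only closes if one also commits a compensating sign error, reading $-\pi_{n-1}(c)$ as $n-\sqrt{-cn}+\dots$. So the structural crux you identify (cancellation of exponentials in the cross term versus dominance of $e^{4\sqrt{-cn}}$ in the squared term) is the right one, but the conclusions drawn from it must be swapped: dominance for $N>0$ annihilates the fraction, and its absence for $N=0$ is what produces the $-2\sqrt{-cn}$.
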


\begin{proof}
We deduce this result from the asymptotic expansions derived before:
combining (\ref{Perron_Laguerre}) with (\ref{asympF}), we obtain 
\begin{equation*}
\widehat{L}_{n-1}^{\alpha }(c)\widehat{F}_{n-1}^{\alpha }(c)=\frac{1}{2\sqrt{%
-cn}}\,\Gamma (n)\Gamma (n+\alpha )\left( 1+\mathcal{O}(n^{-1})\right) .
\end{equation*}%
as $n\rightarrow \infty $. It can be checked that the term of order $%
\mathcal{O}(n^{-1/2})$, which is to be expected here, is actually equal to $%
0 $. Also,
\begin{equation*}
\begin{aligned} \widehat{L}_{n-1}^{\alpha}(c)^{2}&= \frac{D_{\alpha
,c}}{2\sqrt{-cn}}\, \Gamma(n)^{2}n^{\alpha}e^{4\sqrt{-cn}}\left(
1+\mathcal{O}(n^{-1/2})\right),\\ &=\frac{D_{\alpha ,c}}{2\sqrt{-cn}}\,
\Gamma(n)\Gamma(n+\alpha)e^{4\sqrt{-cn}}\left(
1+\mathcal{O}(n^{-1/2})\right) , 
\end{aligned}
\end{equation*}%
using the fact that $\Gamma (n+\alpha )/\Gamma (n)=n^{\alpha }(1+\mathcal{O}%
(n^{-1}))$, as $n\rightarrow \infty $, and the notation 
\begin{equation*}
D_{\alpha ,c}=\frac{e^{c}(-c)^{-\alpha }}{2\pi }.  
\end{equation*}

Putting everything together and using the results in \cite{DHM}, we have 
\begin{equation*}
\begin{aligned}
\Lambda_{n}^{N}&=-\frac{2\sqrt{-cn}}{1+\mathcal{O}(n^{-1})-ND_{\alpha
,c}e^{4\sqrt{-cn}}(1+\mathcal{O}(n^{-1/2}))}+n\frac{L_{n}^{\alpha
}(c)}{L_{n-1}^{\alpha}(c)}, \\
&=-\frac{2\sqrt{-cn}}{1+\mathcal{O}(n^{-1})-ND_{\alpha
,c}e^{4\sqrt{-cn}}(1+\mathcal{O}(n^{-1/2}))} \\ &+n+\sqrt{-cn}+\frac{2\alpha
-2c-1}{4}+\mathcal{O}(n^{-1/2}), \end{aligned}
\end{equation*}

It is important to observe that if $N>0$, the first term in the previous sum
is exponentially small in $n$, so it does not contribute to the final
result. However, if $N=0 $ the first term does contribute, since the
exponential term is not present, and then we have the difference in sign in
the subleading term given in Proposition \ref{Prop_asympL}.
\end{proof}

\section{Asymptotics for $\protect\widehat{Q}_{n}^{\alpha ,c,N}(z)$}
\label{Sect_asymp_Q}

Using the estimates for $\Lambda _{n}^{N}$ (\ref{AsympLn}) and (\ref{AsympLn0}), we describe first the strong
asymptotics for $\widehat{Q}_{n}^{\alpha ,c,N}(z)$, for $z$ away from the interval of orthogonality  
(outer asymptotics): 

\begin{proposition}
Given fixed values of $N\geq 0$, $\alpha>-1$, $c\in(-\infty,0)$ and $z\in%
\mathbb{C}\setminus[0,\infty)$, as $n\to\infty$, the monic polynomials $%
\widehat{Q}_{n}^{\alpha ,c,N}(z)$ verify the following strong asymptotics 
\begin{equation}
\widehat{Q}_{n}^{\alpha ,c,N}(z) =\frac{(-1)^n n!}{2\sqrt{\pi}}e^{z/2+2\sqrt{%
-nz}}(-z)^{-\frac{\alpha}{2}-\frac{1}{4}}n^{\frac{\alpha}{2}-\frac{3}{4}}
\left(\sqrt{-z}\mp\sqrt{-c}\right)\left(1+\mathcal{O}(n^{-1/2})\right),
\label{strong1}
\end{equation}%
and the following ratio asymptotics with respect to monic Laguerre
polynomials: 
\begin{equation}
\frac{\widehat{Q}_{n}^{\alpha ,c,N}(z)}{\widehat{L}_{n}^{\alpha }(z)}= \frac{%
\sqrt{-z}\mp \sqrt{-c}}{\sqrt{n}}-\frac{(\sqrt{-z}\mp \sqrt{-c})^{2}}{2n}+%
\mathcal{O}(n^{-3/2}).  \label{relat1}
\end{equation}
In both cases, the upper sign corresponds to the case $N>0$ and the lower 
sign to $N=0$.
\end{proposition}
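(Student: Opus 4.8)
The plan is to substitute the known asymptotic expansions directly into the connection formula~(\ref{connection}), since everything reduces to controlling the two ingredients $\widehat{L}_n^{\alpha}(z)$ and $\Lambda_n^N \widehat{L}_{n-1}^{\alpha}(z)$. First I would record the Perron expansion~(\ref{Perron_Laguerre}) for $\widehat{L}_n^{\alpha}(z)$ in monic form, using~(\ref{PL}) to write $\widehat{L}_n^{\alpha}(z)=(-1)^n n! L_n^{\alpha}(z)$; this supplies the prefactor $\frac{(-1)^n n!}{2\sqrt{\pi}}e^{z/2+2\sqrt{-nz}}(-z)^{-\alpha/2-1/4}n^{\alpha/2-1/4}(1+\mathcal{O}(n^{-1/2}))$ that appears in~(\ref{strong1}). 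Next I would use~(\ref{connection}) to factor out $\widehat{L}_n^{\alpha}(z)$, writing
\begin{equation*}
\widehat{Q}_n^{\alpha,c,N}(z)=\widehat{L}_n^{\alpha}(z)\left(1+\Lambda_n^N\,\frac{\widehat{L}_{n-1}^{\alpha}(z)}{\widehat{L}_n^{\alpha}(z)}\right)=\widehat{L}_n^{\alpha}(z)\left(1+\frac{\Lambda_n^N}{\pi_{n-1}(z)}\right),
\end{equation*}
which immediately isolates the ratio asymptotics and makes both~(\ref{strong1}) and~(\ref{relat1}) come from the same computation.

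The core step is then to expand the bracket $1+\Lambda_n^N/\pi_{n-1}(z)$. I would insert the expansion~(\ref{asymppi}) for $\pi_{n-1}(z)=-n-\sqrt{-zn}+\tfrac{2z-2\alpha+1}{4}+\mathcal{O}(n^{-1/2})$ together with the Proposition~\ref{Prop_asympL} expansions for $\Lambda_n^N$, namely $\Lambda_n^N=n\pm\sqrt{-cn}+\tfrac{2\alpha-2c-1}{4}+\mathcal{O}(n^{-1/2})$ with the upper sign for $N>0$. Observe that the leading $\pm n$ terms of numerator and denominator cancel up to sign, so $1+\Lambda_n^N/\pi_{n-1}(z)$ is $o(1)$: the leading behaviour is governed by the $\sqrt{n}$ terms, and a careful expansion of the quotient in powers of $n^{-1/2}$ should produce
\begin{equation*}
1+\frac{\Lambda_n^N}{\pi_{n-1}(z)}=\frac{\sqrt{-z}\mp\sqrt{-c}}{\sqrt{n}}-\frac{(\sqrt{-z}\mp\sqrt{-c})^2}{2n}+\mathcal{O}(n^{-3/2}),
\end{equation*}
which is exactly the factor $(\sqrt{-z}\mp\sqrt{-c})$ over $\sqrt{n}$ (to leading order) seen in~(\ref{strong1}) and the full two-term ratio~(\ref{relat1}). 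The sign flip between $N>0$ and $N=0$ is inherited directly from the sign in $\Lambda_n^N$, so both cases are handled simultaneously by carrying the $\mp$ symbol through the algebra.

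Multiplying this bracket by the monic Perron prefactor for $\widehat{L}_n^{\alpha}(z)$ then yields~(\ref{strong1}) after combining the powers of $n$ (the factor $n^{\alpha/2-1/4}$ from Perron times $n^{-1/2}$ from the bracket gives $n^{\alpha/2-3/4}$), while dividing~(\ref{connection}) by $\widehat{L}_n^{\alpha}(z)$ gives~(\ref{relat1}) directly. The main obstacle I anticipate is the bookkeeping in the ratio expansion: because the leading terms cancel, I must retain the subleading $\tfrac14(2z-2\alpha+1)$ and $\tfrac14(2\alpha-2c-1)$ contributions and verify that the $\mathcal{O}(n^{-1/2})$ error terms in both $\pi_{n-1}$ and $\Lambda_n^N$ genuinely propagate only to the stated $\mathcal{O}(n^{-3/2})$ order in the quotient and do not corrupt the $n^{-1}$ coefficient. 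I would organize this by expanding $\Lambda_n^N/\pi_{n-1}(z)=-\Lambda_n^N/n\cdot(1+\sqrt{-z/n}-\cdots)^{-1}$ as a geometric series in $n^{-1/2}$ and tracking terms order by order, double-checking the delicate cancellation at order $n^0$ that makes the expression start at $n^{-1/2}$.
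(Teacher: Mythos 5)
Your proposal is correct and follows essentially the same route as the paper: both factor the connection formula as $\widehat{Q}_n^{\alpha,c,N}(z)=\bigl(1+\Lambda_n^N\,\widehat{L}_{n-1}^{\alpha}(z)/\widehat{L}_n^{\alpha}(z)\bigr)\widehat{L}_n^{\alpha}(z)$, insert the ratio asymptotics (\ref{asymppi}) together with Proposition \ref{Prop_asympL}, and then multiply by the monic Perron expansion to pass from (\ref{relat1}) to (\ref{strong1}). The delicate cancellation you flag at order $n^{0}$, and the identity $\tfrac{2\alpha-2c-1}{4}+\tfrac{2z-2\alpha+1}{4}+(-z)\mp\sqrt{zc}=\tfrac12(\sqrt{-z}\mp\sqrt{-c})^2$ behind the $n^{-1}$ coefficient, both check out exactly as you anticipate.
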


\begin{proof}
We rewrite the connection formula (\ref{connection}) as follows: 
\begin{equation}  \label{connection2}
\widehat{Q}_{n}^{\alpha ,c,N}(z)=\left(1+\Lambda_n^N\frac{\widehat{L}%
_{n-1}^{\alpha}(z)}{\widehat{L}_{n}^{\alpha}(z)}\right) \widehat{L}%
_{n}^{\alpha}(z),
\end{equation}
and we use the information obtained so far. Observe that division by
$\widehat{L}_{n}^{\alpha}(z)$ does not cause any problem, since the zeros of this 
polynomial are contained in $[0,\infty)$. From (\ref{asymppi}), we deduce
\begin{equation*}
\frac{\widehat{L}_{n-1}^{\alpha }(z)}{\widehat{L}_{n}^{\alpha
}(z)}=-\frac{1}{n}\left[1-\sqrt{-\frac{z}{n}}+\frac{1-2z-2\alpha}{4n}+\mathcal{O}(n^{-3/2})\right],
\qquad n\to\infty,
\end{equation*}
and combining this with the asymptotic behavior for $\Lambda_n^N$ given by 
Proposition \ref{Prop_asympL} and (\ref{asymppi}), we obtain (\ref{strong1}). Then 
formula (\ref{relat1}) for the relative asymptotic behavior is a direct consequence 
of (\ref{connection2}).
\end{proof}

We remark that this is consistent with the result in \cite[Proposition 3.4 a)%
]{Fej}, taking $N=0$ and $M=1$.


Applying the connection formula again, but with the inner asymptotic expansion for Laguerre 
polynomials, we can obtain strong asymptotics of $\widehat{Q}_{n}^{\alpha ,c,N}(x)$ 
for $x\in(0,\infty)$. 
\begin{proposition}
Given fixed values of $N\geq 0$, $\alpha>-1$, $c\in(-\infty,0)$ and $x$ in compact intervals of 
$(0,\infty)$, as $n\to\infty$, the monic polynomials $%
\widehat{Q}_{n}^{\alpha ,c,N}(x)$ verify the following strong asymptotics 
\begin{equation}
\widehat{Q}_{n}^{\alpha ,c,N}(x) =(-1)^{n+1} n!\,
\frac{n^{\alpha/2-3/4} e^{x/2}}{\sqrt{\pi}\, x^{\alpha/2+1/4}}
\left[
\sqrt{x}\,\sin\theta_n^{\alpha}(x)\pm \sqrt{-c}\,\cos\theta_n^{\alpha}(x)
+\mathcal{O}(n^{-1/2})\right],
\label{strong_inner}
\end{equation}%
where the phase function is
\begin{equation}\label{theta_n}
\theta_n^{\alpha}(x)=2\sqrt{nx}-\left(\frac{\alpha}{2}+\frac{1}{4}\right)\pi,
\end{equation}
and again the upper sign corresponds to the case $N>0$ and the lower 
sign to $N=0$.
\end{proposition}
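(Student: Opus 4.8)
The plan is to run the connection formula (\ref{connection}) once more, this time feeding in the oscillatory (Fej\'er/Mehler--Heine type) asymptotics valid on compact subsets of $(0,\infty)$ rather than the Perron expansion used for the outer region. First I would use (\ref{PL}) to pass to the standard Laguerre polynomials, writing
\begin{equation*}
\widehat{Q}_{n}^{\alpha,c,N}(x)=(-1)^{n}n!\left[L_{n}^{\alpha}(x)-\frac{\Lambda_{n}^{N}}{n}L_{n-1}^{\alpha}(x)\right],
\end{equation*}
so that the whole problem reduces to the asymptotics of the bracket. For the individual factors I would invoke the classical oscillatory expansion \cite[\S 8.22]{Szego}, which on compacts of $(0,\infty)$ reads
\begin{equation*}
L_{n}^{\alpha}(x)=\frac{n^{\alpha/2-1/4}e^{x/2}}{\sqrt{\pi}\,x^{\alpha/2+1/4}}\left[\cos\theta_{n}^{\alpha}(x)+\mathcal{O}(n^{-1/2})\right],
\end{equation*}
with $\theta_{n}^{\alpha}$ as in (\ref{theta_n}), together with the analogous formula for $L_{n-1}^{\alpha}(x)$.

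The key point is that, since $\Lambda_{n}^{N}/n\to 1$, the two leading $\cos$-terms cancel, so the final answer is produced entirely by the first subleading corrections, which I would track carefully. There are exactly two sources. The prefactor contributes only at relative order $\mathcal{O}(n^{-1})$, because $(n-1)^{\alpha/2-1/4}=n^{\alpha/2-1/4}(1+\mathcal{O}(n^{-1}))$. The phase shift is the essential one: expanding $2\sqrt{(n-1)x}=2\sqrt{nx}-\sqrt{x/n}+\mathcal{O}(n^{-3/2})$ gives $\theta_{n-1}^{\alpha}(x)=\theta_{n}^{\alpha}(x)-\sqrt{x/n}+\mathcal{O}(n^{-3/2})$, whence $\cos\theta_{n-1}^{\alpha}(x)=\cos\theta_{n}^{\alpha}(x)+\sqrt{x/n}\,\sin\theta_{n}^{\alpha}(x)+\mathcal{O}(n^{-1})$. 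Inserting the expansion $\Lambda_{n}^{N}/n=1\pm\sqrt{-c/n}+\mathcal{O}(n^{-1})$ coming from Proposition \ref{Prop_asympL} (upper sign for $N>0$, lower for $N=0$) and collecting the $\mathcal{O}(n^{-1/2})$ contributions, the bracket becomes
\begin{equation*}
L_{n}^{\alpha}(x)-\frac{\Lambda_{n}^{N}}{n}L_{n-1}^{\alpha}(x)=-\frac{n^{\alpha/2-1/4}e^{x/2}}{\sqrt{\pi}\,x^{\alpha/2+1/4}}\,\frac{1}{\sqrt{n}}\left[\sqrt{x}\,\sin\theta_{n}^{\alpha}(x)\pm\sqrt{-c}\,\cos\theta_{n}^{\alpha}(x)+\mathcal{O}(n^{-1/2})\right].
\end{equation*}
Multiplying by $(-1)^{n}n!$ and folding the factor $n^{-1/2}$ into the power of $n$ then yields (\ref{strong_inner}) with the stated signs.

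The main obstacle is precisely this cancellation of the leading terms: because the $\mathcal{O}(1)$ parts annihilate, one must be sure that the surviving answer at order $n^{-1/2}$ is not swamped by the $\mathcal{O}(n^{-1/2})$ relative errors carried by the individual oscillatory formulas for $L_{n}^{\alpha}$ and $L_{n-1}^{\alpha}$. The resolution is that those errors enter through nearly equal data ($\theta_{n}^{\alpha}\approx\theta_{n-1}^{\alpha}$ and $\Lambda_{n}^{N}/n\approx 1$), so their difference is one order smaller, i.e. $\mathcal{O}(n^{-1})$ relative to the prefactor, and hence only affects the $\mathcal{O}(n^{-1/2})$ term inside the bracket. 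To make this rigorous I would use the oscillatory expansion to one further term, with an $\mathcal{O}(n^{-1})$ relative remainder depending smoothly on $n$, so that the $n\to n-1$ difference of the remainders is controlled; uniformity on compact subsets of $(0,\infty)$ guarantees the stated error estimate. Everything else is the same bookkeeping of subleading terms already carried out for the outer region in the previous proposition.
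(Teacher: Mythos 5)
Your proof is correct, but it takes a genuinely different route from the paper's. You keep the connection formula (\ref{connection}) in the form $L_n^{\alpha}(x)-(\Lambda_n^N/n)L_{n-1}^{\alpha}(x)$ and confront the leading-order cancellation head-on: the surviving signal at order $n^{-1/2}$ comes from the phase shift $\theta_{n-1}^{\alpha}(x)=\theta_n^{\alpha}(x)-\sqrt{x/n}+\mathcal{O}(n^{-3/2})$ together with $\Lambda_n^N/n=1\pm\sqrt{-c/n}+\mathcal{O}(n^{-1})$, and to keep that signal from being swamped you correctly upgrade from the one-term Fej\'er formula to the two-term oscillatory expansion of Szeg\H{o} \cite[Theorem 8.22.2]{Szego}, whose explicit $n^{-1/2}$ terms nearly cancel under $n\to n-1$ (their difference is $\mathcal{O}(n^{-1})$) and whose remainders are individually $\mathcal{O}(n^{-1})$; note that with remainders of that size nothing about their \emph{difference} needs to be controlled, so your closing sentence slightly overstates what is required, but the substance of the fix is right. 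The paper instead sidesteps the cancellation entirely: it uses the identity $L_{n-1}^{\alpha}(z)=L_n^{\alpha}(z)-L_n^{\alpha-1}(z)$ of (\ref{connection_Laguerre}) to recast the connection formula as
\begin{equation*}
\widehat{Q}_n^{\alpha,c,N}(x)=\left(1-\frac{\Lambda_n^N}{n}\right)\widehat{L}_n^{\alpha}(x)+\frac{\Lambda_n^N}{n}\,\widehat{L}_n^{\alpha-1}(x),
\end{equation*}
a combination of two polynomials of the \emph{same} degree $n$ with parameters $\alpha$ and $\alpha-1$, in which the first coefficient is already $\mathcal{O}(n^{-1/2})$ and the second is $\mathcal{O}(1)$, so no leading terms ever need to cancel. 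The sine term then appears immediately from $\cos\theta_n^{\alpha-1}(x)=-\sin\theta_n^{\alpha}(x)$, and the plain Fej\'er formula (\ref{Fejer_Laguerre}), with its crude $\mathcal{O}(n^{-1/2})$ relative error, suffices. In short, the paper's decomposition buys a proof from weaker asymptotic input, whereas yours dispenses with the parameter-shift identity at the cost of a sharper expansion and more delicate error bookkeeping; both routes produce the stated formula with the same signs.
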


\begin{proof}
We rewrite (\ref{connection}) as follows:
\begin{equation}\label{connection3}
\widehat{Q}_{n}^{\alpha ,c,N}(x) =
\left(1-\frac{\Lambda_n^N}{n}\right)
\widehat{L}_{n}^{\alpha}(x) 
+\frac{\Lambda_n^N}{n}
\widehat{L}_{n}^{\alpha-1}(x),
\end{equation}
where we have used the following identity for standard Laguerre polynomials:
\begin{equation}\label{connection_Laguerre}
L^{\alpha}_{n-1}(z)=L^{\alpha}_n(z)-L^{\alpha-1}_n(z),
\end{equation}
see for example \cite[18.9.13]{DLMF}. Then, we use the classical Fej\'er formula for Laguerre polynomials, 
see \cite[Theorems 8.22.1, 8.22.2]{Szego}, adapted to the monic case:
\begin{equation}\label{Fejer_Laguerre}
\widehat{L}_{n}^{\alpha}(x)=
(-1)^n n!\, \frac{n^{\alpha/2-1/4} e^{x/2}}{\sqrt{\pi}\, x^{\alpha/2+1/4}}
\left[
\cos\theta_n^{\alpha}(x)+\mathcal{O}(n^{-1/2})\right],
\end{equation}
valid for $x$ in compact intervals of $(0,\infty)$, with phase function (\ref{theta_n}).

From the asymptotic expansion of $\Lambda_n^N$ we deduce that
\begin{equation*}
1-\frac{\Lambda_n^N}{n}=\mp\sqrt{\frac{-c}{n}}+\mathcal{O}(n^{-1}), \qquad 
\frac{\Lambda_n^N}{n}=1\pm\sqrt{\frac{-c}{n}}+\mathcal{O}(n^{-1}),
\end{equation*}
where the upper sign corresponds to the case $N>0$ and the lower sign to $N=0$. 
Using this information and (\ref{Fejer_Laguerre}), we expand as $n\to\infty$ 
in (\ref{connection3}), bearing in mind that from the definition 
$\cos\theta_n^{\alpha-1}(x)=-\sin\theta_n^{\alpha}(x)$, and we arrive at the result.
\end{proof}

Another useful asymptotic behavior that one can find in the literature is
the Mehler-Heine type formulas. Is very well known that for $j\in \mathbb{N}%
\cup \left\{ 0\right\}$, the standard Laguerre polynomials verify 
\begin{equation}
\lim_{n\rightarrow \infty }\frac{L_{n}^{(\alpha )}\left( z/(n+j)\right) }{%
n^{\alpha }}=z^{-\alpha /2}J_{\alpha }\left( 2\sqrt{z}\right) ,
\label{MHLag}
\end{equation}%
uniformly for $z$ in compact subsets of $\mathbb{C}$, see \cite[Theorem 8.1.3%
]{Szego}, where $J_{\alpha }$ is the Bessel function of the first kind, and the square root 
takes its principal value. Using this result, we can prove the following:

\begin{proposition}
For given values of $N\geq 0$, $\alpha>-1$, $c\in(-\infty,0)$ and $z$ in
compact subsets of $\mathbb{C}$, the polynomials $\widehat{Q}_{n}^{\alpha
,c,N}(z)$ verify 
\begin{equation}
\lim_{n\rightarrow \infty } \frac{(-1)^n}{n!}\,
\frac{\widehat{Q}_n^{\alpha ,c,N}(z/n)}{n^{\alpha-1/2}} =\mp\sqrt{-c}\,J_{\alpha }\left( 2%
\sqrt{z}\right),  \label{MHQ}
\end{equation}
where the upper sign corresponds to the case $N>0$ and the lower 
sign to $N=0$.
\end{proposition}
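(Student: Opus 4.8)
The plan is to use the connection formula together with the Mehler--Heine
asymptotics for the Laguerre polynomials, exactly in the spirit of the
previous two propositions. Starting from (\ref{connection}), I would
evaluate at the scaled argument $z/n$ to obtain
\begin{equation*}
\widehat{Q}_{n}^{\alpha ,c,N}(z/n)=\widehat{L}_{n}^{\alpha }(z/n)+\Lambda
_{n}^{N}\,\widehat{L}_{n-1}^{\alpha }(z/n).
\end{equation*}
Converting to standard Laguerre polynomials via (\ref{PL}), namely
$\widehat{L}_{n}^{\alpha }(x)=(-1)^{n}n!\,L_{n}^{\alpha }(x)$ and
$\widehat{L}_{n-1}^{\alpha }(x)=(-1)^{n-1}(n-1)!\,L_{n-1}^{\alpha }(x)$, I
would factor out $(-1)^{n}n!$ and rewrite the right-hand side so that both
terms are normalized against $n^{\alpha }$ in preparation for applying
(\ref{MHLag}).

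The key observation is a matter of bookkeeping of the powers of $n$. The
term $\widehat{L}_{n}^{\alpha }(z/n)$ contributes, after dividing by
$(-1)^{n}n!\,n^{\alpha }$, a quantity tending to $z^{-\alpha /2}J_{\alpha
}(2\sqrt{z})$ by (\ref{MHLag}) with $j=0$. The second term carries the
factor $\Lambda _{n}^{N}$, which by Proposition \ref{Prop_asympL} grows
like $n\pm\sqrt{-cn}$; since $\widehat{L}_{n-1}^{\alpha }(z/n)$ is smaller
than $\widehat{L}_{n}^{\alpha }(z/n)$ by one factor of $n$ (from the
$(n-1)!$ versus $n!$ and the shift in degree), the dominant contribution of
the second term is of the same order as the first, and it is precisely the
leading behavior $\Lambda _{n}^{N}\sim n$ that survives after scaling by
$n^{\alpha -1/2}$. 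I would therefore write $\Lambda _{n}^{N}/n=1\pm\sqrt{-c/n}
+\mathcal{O}(n^{-1})$, as already recorded in the proof of the inner
asymptotics, and track how this interacts with the Mehler--Heine limit.

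Concretely, after dividing by $(-1)^{n}n!\,n^{\alpha -1/2}$, the first term
is $O(n^{-1/2})z^{-\alpha /2}J_{\alpha }(2\sqrt{z})$ and hence vanishes in
the limit, while the second term becomes
\begin{equation*}
\frac{\Lambda _{n}^{N}}{n}\cdot(-1)^{-1}\cdot\frac{1}{\sqrt{n}}\cdot
\frac{L_{n-1}^{\alpha }(z/n)}{n^{\alpha }}\cdot\sqrt{n},
\end{equation*}
and the delicate point is to apply (\ref{MHLag}) with the correct index:
since the argument is $z/n$ rather than $z/(n-1)$, I would use the
flexibility of (\ref{MHLag}) in the shift parameter $j$ (equivalently, note
that $L_{n-1}^{\alpha }(z/n)/n^{\alpha }$ and $L_{n-1}^{\alpha
}(z/(n-1))/(n-1)^{\alpha }$ share the same limit $z^{-\alpha /2}J_{\alpha
}(2\sqrt{z})$ because the discrepancy is $\mathcal{O}(n^{-1})$). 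The factor
$\pm\sqrt{-c/n}$ in $\Lambda _{n}^{N}/n$ multiplied by the surviving
$\sqrt{n}$ from the degree mismatch produces the constant $\pm\sqrt{-c}$,
and assembling signs gives the stated $\mp\sqrt{-c}\,z^{-\alpha
/2}J_{\alpha }(2\sqrt{z})\cdot z^{\alpha/2}$; a final simplification using
$z^{-\alpha/2}J_{\alpha}(2\sqrt{z})$ yields (\ref{MHQ}).

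The main obstacle will be the careful accounting of the powers of $n$ and
the alternating sign $(-1)^{n}$ coming from passing between monic and
standard Laguerre polynomials, together with confirming that the leading
$z^{-\alpha/2}J_{\alpha}(2\sqrt{z})$ contribution of the degree-$n$ term
genuinely drops out after scaling, so that only the $\pm\sqrt{-c}$ piece of
$\Lambda _{n}^{N}$ contributes at the surviving order. Once the orders are
matched, the argument is a direct limit computation, and the uniformity on
compact subsets of $\mathbb{C}$ follows from the corresponding uniformity in
(\ref{MHLag}) and the fact that the error terms in $\Lambda _{n}^{N}$ are
independent of $z$.
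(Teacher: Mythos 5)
Your overall plan (connection formula plus Mehler--Heine) is the same as the paper's, but the execution contains a genuine error in the power counting, and it hides the one point where the proof actually requires an idea. After dividing by $(-1)^n n!\, n^{\alpha-1/2}$, the contribution of $\widehat{L}_n^{\alpha}(z/n)$ is $n^{1/2}\,L_n^{\alpha}(z/n)/n^{\alpha}$, which by (\ref{MHLag}) behaves like $n^{1/2}\,z^{-\alpha/2}J_{\alpha}(2\sqrt{z})$: it \emph{diverges}, rather than being $\mathcal{O}(n^{-1/2})$ and vanishing as you claim. The second term likewise diverges like $-n^{1/2}\,z^{-\alpha/2}J_{\alpha}(2\sqrt{z})$, since $\Lambda_n^N/n\to 1$. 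The finite limit exists only because these two divergent leading pieces cancel, and your proposal never establishes that cancellation at the required rate. This is not a cosmetic gap: knowing from (\ref{MHLag}) that $L_n^{\alpha}(z/n)/n^{\alpha}$ and $L_{n-1}^{\alpha}(z/n)/n^{\alpha}$ have the same limit only gives
\begin{equation*}
L_n^{\alpha}(z/n)-L_{n-1}^{\alpha}(z/n)=o(n^{\alpha}),
\end{equation*}
whereas closing the argument demands this difference be $o(n^{\alpha-1/2})$. Term-by-term Mehler--Heine limits cannot deliver that rate, so the step ``the leading contribution genuinely drops out after scaling'' is precisely what remains unproved; your two claims that the first term vanishes and that the leading behavior $\Lambda_n^N\sim n$ is what survives are, moreover, mutually inconsistent.

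The paper's proof resolves exactly this difficulty with one extra ingredient you do not use: the identity $L_{n-1}^{\alpha}(z)=L_n^{\alpha}(z)-L_n^{\alpha-1}(z)$, see (\ref{connection_Laguerre}). Rewriting the connection formula as
\begin{equation*}
\frac{(-1)^n}{n!}\,\widehat{Q}_{n}^{\alpha,c,N}(z/n)
=\left(1-\frac{\Lambda_n^N}{n}\right)L^{\alpha}_n(z/n)
+\frac{\Lambda_n^N}{n}\,L^{\alpha-1}_n(z/n)
\end{equation*}
turns the hidden cancellation into explicit small factors: the first coefficient is $\mp\sqrt{-c/n}+\mathcal{O}(n^{-1})$ by Proposition \ref{Prop_asympL}, and the second term is $\mathcal{O}(n^{\alpha-1})$ because the Mehler--Heine limit now applies with parameter $\alpha-1$. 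Each term then has a definite order, and multiplying by $n^{1/2}$ and applying (\ref{MHLag}) to each separately gives the limit $\mp\sqrt{-c}\,z^{-\alpha/2}J_{\alpha}(2\sqrt{z})$. Finally, your closing manipulation that inserts a factor $z^{\alpha/2}$ ``as a final simplification'' is unjustified; the computation produces $\mp\sqrt{-c}\,z^{-\alpha/2}J_{\alpha}(2\sqrt{z})$, and no legitimate step removes the $z^{-\alpha/2}$. To repair your proof, adopt the regrouping above (or prove an equivalent quantitative bound on $L_n^{\alpha}(z/n)-L_{n-1}^{\alpha}(z/n)$); without it the argument does not go through.
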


\begin{proof}
In order to prove (\ref{MHQ}), we start with the connection formula (\ref%
{connection}) for monic polynomials. In terms of standard Laguerre
polynomials, recall (\ref{PL}), we have 
\begin{equation*}  
\begin{aligned} \frac{(-1)^n}{n!}\,\widehat{Q}_{n}^{\alpha}(z/n) &=
L^{\alpha}_n(z/n)-\frac{\Lambda_n^N}{n} L^{\alpha}_{n-1}(z/n)\\
&=\left(1-\frac{\Lambda_n^N}{n}\right)L^{\alpha}_n(z/n)+
\frac{\Lambda_n^N}{n}L^{\alpha-1}_n(z/n), \end{aligned}
\end{equation*}
where we have used (\ref{connection_Laguerre}) again. Consequently,
\begin{equation}  \label{For2_MH}
\frac{(-1)^n}{n!}\,\frac{\widehat{Q}_{n}^{\alpha}(z/n)}{n^{\alpha}}
=\left(1-\frac{\Lambda_n^N}{n}\right)\frac{L^{\alpha}_n(z/n)}{n^{\alpha}}+
\frac{\Lambda_n^N}{n^2}\frac{L^{\alpha-1}_n(z/n)}{n^{\alpha-1}}.
\end{equation}

Next, using the asymptotic expansion for $\Lambda_n^N$, we deduce that as $%
n\to\infty$, 
\begin{equation*}
1-\frac{\Lambda_n^N}{n}=\mp\sqrt{\frac{-c}{n}}+\mathcal{O}(n^{-1}), \qquad 
\frac{\Lambda_n^N}{n^2}=\mathcal{O}(n^{-1}),
\end{equation*}
where the upper sign corresponds to the case $N>0$ and the lower sign to $N=0$%
. Then multiplication by $n^{1/2}$ in (\ref{For2_MH}) and the use of the
Mehler--Heine asymptotics for Laguerre polynomials (\ref{MHLag}) give the result.
\end{proof}

Again, we note that this is consistent with \cite[Proposition 3.4 b)]{Fej},
taking $N=0$ and $M=1$.


\section{Asymptotics for recurrence coefficients}
\label{Sect_asymp_rc}

The monic Laguerre polynomials satisfy the three-term recurrence relation (%
\ref{TTRRLaguerre}), and from that we can obtain an analogous recurrence for
the perturbed polynomials $\widehat{Q}_n^{\alpha,c,N}(z)$: 

\begin{theorem}
The polynomials $\widehat{Q}_{n}^{\alpha ,c,N}(z)$ satisfy a three term
recurrence relation 
\begin{equation}
\widehat{Q}_{n+1}^{\alpha ,c,N}(z)=(x-\tilde{\beta}_{n})\widehat{Q}%
_{n}^{\alpha ,c,N}(z)-\tilde{\gamma}_{n}\widehat{Q}_{n-1}^{\alpha ,c,N}(z),
\label{TTRRQ}
\end{equation}%
where the coefficients are given by 
\begin{align*}
\tilde{\beta}_{n} &=\beta_n + \Lambda^N_n - \Lambda^N_{n+1}, \\
\tilde{\gamma}_{n} &= \frac{\Lambda^N_n}{\Lambda^N_{n-1}} \gamma_{n-1},
\end{align*}
and $\beta_n$ and $\gamma_n$ are given by (\ref{bngnLaguerre}).
\end{theorem}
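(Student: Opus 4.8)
The plan is to combine the two ingredients already at hand: the connection formula (\ref{connection}), which writes each $\widehat{Q}_n^{\alpha,c,N}$ in the monic Laguerre basis, and the Laguerre three-term recurrence (\ref{TTRRLaguerre}). The measure $\nu_N$ in (\ref{Qn}) is a positive Borel measure, since its density $x^\alpha e^{-x}/(x-c)$ is positive on $[0,\infty)$ because $c<0$, and $N\geq 0$; hence the inner product $\langle\cdot,\cdot\rangle_{\nu_N}$ is positive definite and $\{\widehat{Q}_n^{\alpha,c,N}\}$ is a genuine monic orthogonal polynomial sequence. By the classical theory it therefore satisfies a three-term recurrence of the form (\ref{TTRRQ}), and the whole task is to identify the coefficients $\tilde\beta_n$ and $\tilde\gamma_n$.

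First I would expand $x\,\widehat{Q}_n^{\alpha,c,N}(x)=x\,\widehat{L}_n^\alpha(x)+\Lambda_n^N\,x\,\widehat{L}_{n-1}^\alpha(x)$ and eliminate the factors $x\widehat{L}_n^\alpha$ and $x\widehat{L}_{n-1}^\alpha$ using (\ref{TTRRLaguerre}). This presents $x\,\widehat{Q}_n^{\alpha,c,N}$ as an explicit combination of $\widehat{L}_{n+1}^\alpha,\widehat{L}_n^\alpha,\widehat{L}_{n-1}^\alpha,\widehat{L}_{n-2}^\alpha$, with leading coefficient $1$, coefficient $\beta_n+\Lambda_n^N$ on $\widehat{L}_n^\alpha$, coefficient $\gamma_n+\Lambda_n^N\beta_{n-1}$ on $\widehat{L}_{n-1}^\alpha$, and coefficient $\Lambda_n^N\gamma_{n-1}$ on $\widehat{L}_{n-2}^\alpha$. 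On the other side, substituting (\ref{connection}) into $\widehat{Q}_{n+1}^{\alpha,c,N}+\tilde\beta_n\widehat{Q}_n^{\alpha,c,N}+\tilde\gamma_n\widehat{Q}_{n-1}^{\alpha,c,N}$ expands the same polynomial in the same four monic Laguerre polynomials.

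Since the monic Laguerre polynomials are linearly independent, I would match coefficients degree by degree. The $\widehat{L}_{n+1}^\alpha$ terms agree automatically (both equal $1$, reflecting monicity); equating the $\widehat{L}_n^\alpha$ coefficients yields $\tilde\beta_n=\beta_n+\Lambda_n^N-\Lambda_{n+1}^N$, and equating the lowest, $\widehat{L}_{n-2}^\alpha$, coefficients yields $\Lambda_n^N\gamma_{n-1}=\tilde\gamma_n\Lambda_{n-1}^N$, that is $\tilde\gamma_n=(\Lambda_n^N/\Lambda_{n-1}^N)\gamma_{n-1}$. Division by $\Lambda_{n-1}^N$ is legitimate, the connection coefficients being nonzero (compatibly with the growth $\Lambda_n^N\sim n$ established in Proposition \ref{Prop_asympL}).

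The only point requiring care is the remaining, $\widehat{L}_{n-1}^\alpha$, equation, which is not used to define the unknowns. The cleanest way to dispose of it is the argument already invoked: the recurrence (\ref{TTRRQ}) is known to exist with some coefficients, and by the uniqueness of the recurrence coefficients of a monic orthogonal polynomial sequence these must coincide with the ones just extracted from the top and bottom terms, so the middle equation is automatically satisfied and need not be checked. Should one prefer a self-contained verification, the middle equation reduces to the identity $\Lambda_{n+1}^N-\Lambda_n^N+\gamma_n/\Lambda_n^N-\gamma_{n-1}/\Lambda_{n-1}^N=\beta_n-\beta_{n-1}=2$, a compatibility relation for the connection coefficients that can be checked from (\ref{Lambdan1}); I would, however, favor the uniqueness route, as it avoids manipulating (\ref{Lambdan1}) directly and is precisely where I expect any apparent difficulty to dissolve.
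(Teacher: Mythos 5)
Your proposal is correct and takes essentially the same approach as the paper: the authors also presuppose the existence of the recurrence for the monic orthogonal sequence, substitute the connection formula \eqref{connection} into both sides, apply the Laguerre recurrence \eqref{TTRRLaguerre}, and read off $\tilde{\beta}_n$ and $\tilde{\gamma}_n$ by matching the $\widehat{L}_{n}^{\alpha}$ and $\widehat{L}_{n-2}^{\alpha}$ coefficients, exactly as you do. Your extra observation that the unused middle equation reduces to $\Lambda_{n+1}^N-\Lambda_n^N=2-\gamma_n/\Lambda_n^N+\gamma_{n-1}/\Lambda_{n-1}^N$ is also in the paper, where it appears not as a consistency check but as the Remark following the theorem, yielding the nonlinear recursion \eqref{eq:recurrence_Lambda} for $\Lambda_n^N$.
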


\begin{proof}
Let us denote by $\tilde \beta$ and $\tilde \gamma$ the coefficients of the
three term recurrence relation for the polynomials $\widehat Q_{n}^{\alpha
,c,N}(z)$: 
\begin{equation*}
x \widehat Q_{n}^{\alpha ,c,N}(z) = \widehat Q_{n+1}^{\alpha ,c,N}(z) +
\tilde\beta_n \widehat Q_{n}^{\alpha ,c,N}(z) + \tilde\gamma_n \widehat
Q_{n-1}^{\alpha ,c,N}(z)
\end{equation*}
Using the connection formula \eqref{connection} on both sides of the
previous equation we get 
\begin{multline*}
x\widehat{L}_{n}^{\alpha }(z)+\Lambda _{n}^{N}x\widehat{L}%
_{n-1}^{\alpha}(z)= \widehat{L}_{n+1}^{\alpha }(z)+(\Lambda _{n+1}^{N} +
\tilde \beta_n) \widehat{L}_{n}^{\alpha }(z) \\
+(\tilde \beta_n \Lambda _{n}^{N} + \tilde \gamma_n ) \widehat{L}%
_{n-1}^{\alpha}(z) + \gamma_n \Lambda _{n-1}^{N}\widehat{L}%
_{n-2}^{\alpha}(z).
\end{multline*}
We use the three-term recurrence relation for the Laguerre polynomials %
\eqref{TTRRLaguerre} on the left hand side of the previous equation and we
obtain 
\begin{multline*}
\widehat L_{n+1}^{\alpha }(z)+(\beta _{n} + \Lambda _{n}^{N}) \widehat
L_{n}^{\alpha}(z)+(\gamma _{n} + \Lambda _{n}^{N}\beta _{n-1}) \widehat
L_{n-1}^{\alpha }(z) + \Lambda _{n}^{N}\gamma _{n-1} \widehat
L_{n-2}^{\alpha }(z) \\
= \widehat{L}_{n+1}^{\alpha }(z)+(\Lambda _{n+1}^{N} + \tilde \beta_n) 
\widehat{L}_{n}^{\alpha }(z) +(\tilde \beta_n \Lambda _{n}^{N} + \tilde
\gamma_n ) \widehat{L}_{n-1}^{\alpha}(z) + \gamma_n \Lambda _{n-1}^{N}%
\widehat{L}_{n-2}^{\alpha}(z).
\end{multline*}
Since the Laguerre polynomials are a basis for the space of polynomials, we
obtain the following equations 
\begin{align}
\beta _{n} + \Lambda _{n}^{N} &= \Lambda _{n+1}^{N} + \tilde \beta_n,
\label{eq:three_t_1} \\
\gamma _{n} + \Lambda _{n}^{N}\beta _{n-1} &= \tilde \beta_n \Lambda
_{n}^{N} + \tilde \gamma_n,  \label{eq:three_t_2} \\
\Lambda _{n}^{N}\gamma _{n-1} &= \tilde \gamma_n \Lambda _{n-1}^{N}.
\label{eq:three_t_3}
\end{align}
The theorem follows directly from formulas \eqref{eq:three_t_1} and %
\eqref{eq:three_t_3}.
\end{proof}

\begin{remark}
From (\ref{eq:three_t_2}), substituting the expressions for $\tilde \gamma_n$
and $\tilde \beta_n$, we obtain the following non--linear recursion for $%
\Lambda_n^N$: 
\begin{equation}  \label{eq:recurrence_Lambda}
\Lambda_{n+1}^N-\Lambda_{n}^N=-\frac{\gamma_n}{\Lambda_{n}^N}+\frac{%
\gamma_{n-1}}{\Lambda_{n-1}^N}+2
\end{equation}
It follows directly from \eqref{eq:recurrence_Lambda} that 
\begin{equation*}
\Lambda^N_{n+1}-\Lambda_2^N=\sum_{i=3}^{n+1} (\Lambda^N_{i}-\Lambda^N_{i-1})
=-\frac{\gamma_n}{\Lambda_n}+2(n-1)+\frac{\gamma_1}{\Lambda_1}.
\end{equation*}
This gives the following recursion for $\Lambda_n$: 
\begin{equation}  \label{eq:recurrence_Lambda_2}
\Lambda^N_{n+1}=-\frac{n(n+\alpha)}{\Lambda^N_n}+2(n-1)+\frac{\gamma_1}{%
\Lambda^N_1}+\Lambda_2^N.
\end{equation}
If we define $\Lambda_n^N=\varrho_n^N/\varrho_{n-1}^N$, then (\ref%
{eq:recurrence_Lambda_2}) becomes 
\begin{equation}  \label{eq:recurrence_rho}
\varrho_{n+1}^N-\left(2(n-1)+\frac{\gamma_1}{\Lambda^N_1}+\Lambda_2^N\right)%
\varrho_n^N +n(n+\alpha)\varrho_{n-1}^N=0.
\end{equation}
\end{remark}

From this result and the asymptotic expansion for $\Lambda^N_n$, see (\ref{AsympLn}) 
and (\ref{AsympLn0}), it is straightforward to deduce the large $n$ asymptotic behavior of the 
recurrence coefficients $\tilde{\beta}_n$ and $\tilde{\gamma}_n$, in terms of the 
recurrence coefficients for monic Laguerre polynomials, $\beta_n$ and $\gamma_n$, 
given by (\ref{bngnLaguerre}):
\begin{corollary} 
As $n\to\infty$, for fixed $c\in(-\infty,0)$ and $N\geq 0$, the coefficients $\tilde{\beta}_n$ and 
$\tilde{\gamma}_n$ of the three term 
recurrence relation for monic Laguerre--Geronimus orthogonal polynomials satisfy
\begin{equation*}
\begin{aligned}
\tilde{\beta}_n&=\left(1-\frac{1}{2n}\mp\frac{\sqrt{-c}}{4n^{3/2}}+\mathcal{O}(n^{-2})\right)\beta_n,\\
\tilde{\gamma}_n&=\left(1+\frac{1}{n}\mp\frac{\sqrt{-c}}{2n^{3/2}}+\mathcal{O}(n^{-2})\right)\gamma_{n-1},
\end{aligned}
\end{equation*}
where the upper sign corresponds to the case $N>0$ and the lower sign to the case $N=0$.
\end{corollary}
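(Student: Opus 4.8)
The plan is to start from the exact expressions for the perturbed recurrence coefficients furnished by the preceding Theorem,
\[
\tilde{\beta}_n = \beta_n + \Lambda^N_n - \Lambda^N_{n+1}, \qquad
\tilde{\gamma}_n = \frac{\Lambda^N_n}{\Lambda^N_{n-1}}\,\gamma_{n-1},
\]
and to insert into them the large $n$ behaviour of $\Lambda_n^N$ given by Proposition \ref{Prop_asympL}, together with the explicit values of $\beta_n$ and $\gamma_n$ in \eqref{bngnLaguerre}. It is convenient to write the expansion in the unified form $\Lambda_n^N = n \pm \sqrt{-c}\,\sqrt{n} + \tfrac{2\alpha-2c-1}{4} + \mathcal{O}(n^{-1/2})$, with the upper sign for $N>0$ (see \eqref{AsympLn}) and the lower sign for $N=0$ (see \eqref{AsympLn0}); the two cases of the Corollary are then treated simultaneously, the final $\mp$ being inherited from this $\pm$.

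For $\tilde{\beta}_n$ I would compute the finite difference $\Lambda_n^N - \Lambda_{n+1}^N$. The affine parts cancel to produce the constant $-1$, the constant terms $\tfrac{2\alpha-2c-1}{4}$ cancel exactly, and the only surviving subleading contribution comes from the square-root term: using $\sqrt{n}-\sqrt{n+1} = -\tfrac{1}{2}n^{-1/2} + \mathcal{O}(n^{-3/2})$ one obtains $\Lambda_n^N - \Lambda_{n+1}^N = -1 \mp \tfrac{\sqrt{-c}}{2}\,n^{-1/2} + \mathcal{O}(n^{-1})$. Dividing by $\beta_n = 2n+\alpha+1 = 2n\,(1+\mathcal{O}(n^{-1}))$ and collecting powers of $n$ yields the stated expansion for $\tilde{\beta}_n$.

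For $\tilde{\gamma}_n$ I would factor $\Lambda_n^N = n\,(1 \pm \sqrt{-c}\,n^{-1/2} + \tfrac{2\alpha-2c-1}{4}\,n^{-1} + \mathcal{O}(n^{-3/2}))$ and form the ratio $\Lambda_n^N/\Lambda_{n-1}^N$. This splits as $\tfrac{n}{n-1} = 1 + n^{-1} + \mathcal{O}(n^{-2})$ times the ratio of the two bracketed factors; in the latter the $n^{-1/2}$ and $n^{-1}$ coefficients coincide at arguments $n$ and $n-1$, so they contribute only through $n^{-1/2}-(n-1)^{-1/2} = -\tfrac12 n^{-3/2}+\mathcal{O}(n^{-5/2})$, producing a factor $1 \mp \tfrac{\sqrt{-c}}{2}\,n^{-3/2} + \mathcal{O}(n^{-2})$. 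Multiplying the two factors and then by $\gamma_{n-1}$ reproduces the claimed formula.

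The one point that deserves care, and which I regard as the main obstacle, is the bookkeeping of the remainder orders: a naive subtraction of two quantities each known only to $\mathcal{O}(n^{-1/2})$ would leave an $\mathcal{O}(n^{-1/2})$ error, of the very same size as the $\sqrt{-c}$ correction one wants to exhibit. The resolution is that the expansion of $\Lambda_n^N$ produced in Section \ref{Sect_asymp_Ln} is not an isolated three-term estimate but the head of a genuine asymptotic series in powers of $n^{-1/2}$, inherited from the full expansions of $\pi_{n-1}$, $r_{n-1}$ and $\widehat{F}_n^{\alpha}$. Consequently each term of that series, being a smooth function of $n^{-1/2}$, changes by a relative $\mathcal{O}(n^{-1})$ when $n$ is replaced by $n\pm 1$, so the finite differences and ratios above automatically gain the extra power of $n^{-1}$ that upgrades the remainders to the orders claimed for $\tilde{\beta}_n$ and $\tilde{\gamma}_n$. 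As an independent consistency check one may also confront the formula for $\tilde{\gamma}_n$ with the exact nonlinear recursion \eqref{eq:recurrence_Lambda} satisfied by $\Lambda_n^N$.
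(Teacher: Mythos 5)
Your proposal is correct and follows exactly the route the paper intends: the paper offers no written proof beyond stating that the result follows ``straightforwardly'' from the exact formulas $\tilde{\beta}_n=\beta_n+\Lambda^N_n-\Lambda^N_{n+1}$, $\tilde{\gamma}_n=(\Lambda^N_n/\Lambda^N_{n-1})\gamma_{n-1}$ of the preceding theorem together with Proposition \ref{Prop_asympL}, and your computation carries this out with the correct sign bookkeeping. Your closing remark on why the finite differences and ratios gain an extra power of $n^{-1}$ (because $\Lambda_n^N$ is, up to exponentially small terms, the head of a full asymptotic series in $n^{-1/2}$, so one may expand to sufficiently many terms before differencing) addresses the one subtlety the paper silently glosses over, and is a welcome addition rather than a deviation.
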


\section{Hypergeometric representation of $\widehat{Q}_{n}^{\protect\alpha ,c,N}(z)$}
\label{Sect_hypergeom}

In this section we will derive a representation of the Geronimus
perturbed family of orthogonal polynomials as hypergeometric functions.
For this we need the connection formula \eqref{connection} together with the hypergeometric
representation of the monic Laguerre polynomials, that can be obtained from \cite[18.5.12]{DLMF}:
\begin{align}
\widehat{L}_{n}^{\alpha }\left(z\right) &=\frac{\left( -1\right)
^{n}\Gamma \left( n+\alpha +1\right) }{\Gamma \left( \alpha +1\right) }%
\pFq{1}{1}{-n}{\alpha+1}{z} \nonumber\\
&=\left( -1\right) ^{n}(\alpha +1)_{n}\sum_{k=0}^{\infty }\frac{\left(
-n\right) _{k}}{\left( \alpha +1\right) _{k}}\frac{z^{k}}{k!},
\label{eq:hyperg_laguerre}
\end{align}
\begin{theorem}
\label{thm:expresion_hyperg}
The monic polynomials $\widehat{Q}_{n}^{\alpha ,c,N}(z)$ have the following 
hypergeometric representation
$$\widehat{Q}_{n}^{\alpha ,c,N}(z)= C_{n,\alpha} \,\, \pFq{2}{2}{-n,\,1+e^N_n}{\alpha +1,\, e^N_n}{z},$$
where
$$C_{n,\alpha } =\left( 1-\frac{\Lambda _{n}^{N}}{n+\alpha }\right) \left(
-1\right) ^{n}(\alpha +1)_{n}\,, \quad e^N_n=\frac{n(n+\alpha -\Lambda _{n}^{N})}{\Lambda _{n}^{N}}.$$
\end{theorem}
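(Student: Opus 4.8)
The plan is to start from the connection formula \eqref{connection}, substitute the hypergeometric representation \eqref{eq:hyperg_laguerre} for each monic Laguerre polynomial, and then recombine the two resulting ${}_1F_1$ series into a single ${}_2F_2$ by matching coefficients. Writing both polynomials as power series in $z$, the coefficient of $z^k$ in $\widehat{Q}_n^{\alpha,c,N}(z)$ will be a linear combination of the corresponding Laguerre coefficients at index $n$ and $n-1$; the main computational task is to show that this combined coefficient factors in the shape $(-n)_k(1+e^N_n)_k/[(\alpha+1)_k(e^N_n)_k\,k!]$ times a constant independent of $k$. Let me sketch the key algebraic steps.

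\textbf{Step 1 (assembling the two series).} From \eqref{connection} and \eqref{eq:hyperg_laguerre},
\begin{equation*}
\widehat{Q}_{n}^{\alpha,c,N}(z)=(-1)^n(\alpha+1)_n\sum_{k\geq0}\frac{(-n)_k}{(\alpha+1)_k}\frac{z^k}{k!}+\Lambda_n^N(-1)^{n-1}(\alpha+1)_{n-1}\sum_{k\geq0}\frac{(-(n-1))_k}{(\alpha+1)_k}\frac{z^k}{k!}.
\end{equation*}
Using $(\alpha+1)_n=(n+\alpha)(\alpha+1)_{n-1}$, factor out $(-1)^n(\alpha+1)_{n-1}$ so that the bracketed $z^k$-coefficient becomes a multiple of $(\alpha+1)_k^{-1}(k!)^{-1}$ times $\bigl[(n+\alpha)(-n)_k-\Lambda_n^N(1-n)_k\bigr]$.

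\textbf{Step 2 (the ratio must be rational of degree one in $k$).} The crux is to show that the ratio of the $z^{k+1}$-coefficient to the $z^k$-coefficient of $\widehat{Q}_n^{\alpha,c,N}$ has the form $\dfrac{(-n+k)(1+e^N_n+k)}{(\alpha+1+k)(e^N_n+k)(k+1)}$, which is exactly what characterises a ${}_2F_2$ with upper parameters $-n,\,1+e^N_n$ and lower parameters $\alpha+1,\,e^N_n$. Using $(1-n)_k=\dfrac{(-n)_k}{-n}\cdot\dfrac{-n+? }{\cdots}$ is awkward, so instead I would write $(-n)_k$ and $(1-n)_k$ in terms of a common factor: since $(1-n)_k=(-n)_k\cdot\dfrac{-n+k}{-n}$ is false, I would rather use $(-n)_k=(-n)(1-n)_{k-1}$ and, more cleanly, compare consecutive coefficients directly. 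Writing $a_k$ for the $k$-th coefficient (up to the common prefactor), $a_k\propto (n+\alpha)(-n)_k-\Lambda_n^N(1-n)_k$, and using $(-m)_{k+1}=(-m+k)(-m)_k$ one gets
\begin{equation*}
\frac{a_{k+1}}{a_k}=\frac{1}{(\alpha+1+k)(k+1)}\cdot\frac{(n+\alpha)(-n)(-n+k)-\Lambda_n^N(1-n)(1-n+k)\frac{(1-n)_k}{(-n)_k}\cdots}{(n+\alpha)-\Lambda_n^N\frac{(1-n)_k}{(-n)_k}}.
\end{equation*}
Because the stray ratio $(1-n)_k/(-n)_k=(-n)/(-n+k)$ is itself rational in $k$, the whole expression collapses to a rational function of $k$; carrying out this simplification and demanding that it equal $\dfrac{(-n+k)(1+e^N_n+k)}{(\alpha+1+k)(e^N_n+k)(k+1)}$ forces precisely the stated value $e^N_n=\dfrac{n(n+\alpha-\Lambda_n^N)}{\Lambda_n^N}$.

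\textbf{Step 3 (the constant and termination).} Evaluating $a_0$ fixes the leading constant: at $k=0$ the bracket is $(n+\alpha)-\Lambda_n^N$, and after reinstating the prefactor $(-1)^n(\alpha+1)_{n-1}=(-1)^n(\alpha+1)_n/(n+\alpha)$ this yields $C_{n,\alpha}=(1-\Lambda_n^N/(n+\alpha))(-1)^n(\alpha+1)_n$. Because the upper parameter $-n$ is a negative integer, the ${}_2F_2$ terminates at $k=n$, so the right-hand side is genuinely a polynomial of degree $n$ and the identity holds as an equality of polynomials. \textbf{The main obstacle} I expect is Step 2: the presence of the extra $k$-dependent factor $(1-n)_k/(-n)_k$ inside both numerator and denominator means the coefficient ratio is \emph{not} manifestly of hypergeometric type, and one must verify by direct simplification that the two single-pole contributions combine into the clean product $(-n+k)(1+e^N_n+k)$ over $(\alpha+1+k)(e^N_n+k)(k+1)$; this is most safely confirmed by matching the partial-fraction structure (the pole in $k$ introduced by $e^N_n$ in the denominator must cancel exactly against a matching factor in the numerator), which is where the specific form of $e^N_n$ is essential.
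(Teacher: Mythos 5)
Your overall strategy is exactly the paper's: insert the ${}_1F_1$ representation \eqref{eq:hyperg_laguerre} into the connection formula \eqref{connection}, combine the two series, and recognize the combined coefficient as that of a ${}_2F_2$; your Steps 1 and 3 (the assembly, and the evaluation of the constant from the $k=0$ term) are correct. However, Step 2 --- the crux, as you yourself flag --- rests on an identity that you state upside down, and as written the step would fail. The identity you declare false,
\begin{equation*}
(1-n)_k=(-n)_k\cdot\frac{-n+k}{-n},
\end{equation*}
is in fact true: $(1-n)_k=(-n+1)(-n+2)\cdots(-n+k)$, so that
\begin{equation*}
\frac{(1-n)_k}{(-n)_k}=\frac{-n+k}{-n}=\frac{n-k}{n},
\end{equation*}
which is the \emph{reciprocal} of the value $(-n)/(-n+k)$ that you then feed into your ratio computation. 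With the inverted ratio, the combined coefficient $(n+\alpha)(-n)_k-\Lambda_n^N(1-n)_k$ would become $(-n)_k\bigl[(n+\alpha)+n\Lambda_n^N/(k-n)\bigr]$, which has a spurious pole at $k=n$; the consecutive-coefficient ratio then carries an extra factor $(k-n)/(k+1-n)$ that cannot be matched to the ${}_2F_2$ pattern $\frac{(k-n)(k+1+e_n^N)}{(k+\alpha+1)(k+e_n^N)(k+1)}$, so ``demanding equality'' would yield a contradiction rather than force the stated $e_n^N$.

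Once the ratio is corrected, your detour through consecutive coefficients is unnecessary and the argument collapses to the paper's own proof:
\begin{equation*}
(n+\alpha)(-n)_k-\Lambda_n^N(1-n)_k=(-n)_k\left[(n+\alpha)-\Lambda_n^N\,\frac{n-k}{n}\right]=(-n)_k\,\frac{\Lambda_n^N}{n}\left(k+e_n^N\right),
\end{equation*}
with $e_n^N=n(n+\alpha-\Lambda_n^N)/\Lambda_n^N$; that is, the coefficient is $(-n)_k$ times a factor \emph{linear} in $k$. Writing $k+e_n^N=e_n^N\,(1+e_n^N)_k/(e_n^N)_k$ then turns the series directly into the stated ${}_2F_2$, and your Step 3 recovers $C_{n,\alpha}$ correctly. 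So the architecture of your proposal is sound and essentially identical to the paper's; the genuine gap is that the single Pochhammer identity on which Step 2 hinges is inverted, and the simplification you defer (``carrying out this simplification \ldots forces precisely the stated value'') only goes through after that correction.
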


\begin{proof}
It follows from the connection formula \eqref{connection} and \eqref{eq:hyperg_laguerre} that 
\begin{multline*}
\widehat{Q}_{n}^{\alpha ,c,N}(z) =\left( -1\right) ^{n}(\alpha
+1)_{n}\sum_{k=0}^{\infty }\frac{\left( -n\right) _{k}}{\left( \alpha
+1\right) _{k}}\frac{z^{k}}{k!} \\
+\Lambda _{n}^{N}\,\left( -1\right) ^{n-1}(\alpha
+1)_{n-1}\sum_{k=0}^{\infty }\frac{\left( -n+1\right) _{k}}{\left( \alpha
+1\right) _{k}}\frac{z^{k}}{k!}.
\end{multline*}%
By a straightforward calculation, $\widehat{Q}_{n}^{\alpha ,c,N}(z)$ can be written as
\begin{equation}
\label{eq:Qn_hiperg_brack}
\widehat{Q}_{n}^{\alpha ,c,N}(z)=\left( -1\right) ^{n}(\alpha +1)_{n}\sum_{k=0}^{\infty }\left( \frac{%
\left( -n\right) _{k}}{\left( \alpha +1\right) _{k}}\frac{z^{k}}{k!}
\left[ 1-\frac{(n-k)\Lambda _{n}^{N}}{n(\alpha +n)}\right] \right).
\end{equation}
Next, we rewrite the expression in square brackets as 
\begin{equation}
\label{eq:rational_brack}
1-\frac{(k-n)\Lambda _{n}^{N}}{(-n)(\alpha +n)}=\frac{\Lambda^N_n}{n(\alpha+n)}\left( k+e^N_n\right)= \frac{\Lambda^N_n}{n(\alpha+n)} \, e^N_n\frac{(1+e^N_n)_k}{(e^N_n)_k}, 
\end{equation}
where
$$e^N_n=\frac{n(n+\alpha-\Lambda^N_n)}{\Lambda^N_n}.$$
By replacing \eqref{eq:rational_brack} into \eqref{eq:Qn_hiperg_brack}, we obtain
\begin{align*}
\widehat{Q}_{n}^{\alpha ,c,N}(z)&=\frac{\left( -1\right) ^{n}(\alpha +1)_{n} \, \Lambda^N_n \, e^N_n}{n(\alpha+n)}   \, \sum\limits_{k=0}^{\infty }
\dfrac{(-n)_{k}}{\left( \alpha +1\right) _{k}}  \frac{(1+e^N_n)_k}{(e^N_n)_k} \, \dfrac{z^{k}}{k!} \\
&=\left( 1-\frac{\Lambda _{n}^{N}}{n+\alpha }\right) \left(
-1\right) ^{n}(\alpha +1)_{n}\, \pFq{2}{2}{-n,\, 1+e^N_n}{\alpha+1,\, e^N_n}{z}.
\end{align*}
This completes the proof of the theorem.
\end{proof}

\begin{remark}
The hypergeometric functions ${}_{2}F_{2}$ are solutions to a third-order differential equation \cite[16.8.3]{DLMF}. Therefore, Theorem \ref{thm:expresion_hyperg} implies that the perturbed polynomials $\widehat{Q}_{n}^{\alpha ,c,N}(z)$ are solutions to
\begin{equation}
\label{eq:third_order_ODE}
z^2 y'''-x(x-e^N_{n}-\alpha-2)y''-( (e^N_{n}-n+2)z-(\alpha+1)e^N_{n})y'+n(e^N_{n}+1)y =0.
\end{equation}
This differential equation can be easily obtained from the holonomic equation for the polynomials $\widehat{Q}_{n}^{\alpha ,c,N}$ (see \cite[Section 4.1]{BHR-LNCS14}):
\begin{equation}
\label{eq:second_order_ODE}
y''+R(z)y'+S(z)y=0,
\end{equation}
where
\begin{align*}
 R(z)&=-\frac{\Lambda^N_{n}}{z\Lambda_{n}^N+(n-\Lambda_{n}^N)(n+\alpha-\Lambda_{n}^N)}+\frac{\alpha+1}{z}-1,\\
 S(z)&=\frac{z\Lambda^N_{n}+(n-\Lambda_{n}^N)(n+\alpha)}{z(z\Lambda_{n}^N+(n-\Lambda_{n}^N)(n+\alpha-\Lambda_{n}^N))}+\frac{n-1}{z}.
\end{align*}
Indeed, \eqref{eq:third_order_ODE} is obtained by multiplying the derivative of \eqref{eq:second_order_ODE} by $z^2$ and adding it to \eqref{eq:second_order_ODE} multiplied by $-z(z-e^N_{n}-\alpha-2)-z^2R(z)$.

\end{remark}

\section*{Acknowledgements}
The financial support from the project MTM2012-36732-C03-01 (A. Dea\~{n}o and E. J. Huertas) and the project MTM2012--34787 (A. Dea\~{n}o), both from Ministerio de Econom\'ia y Competitividad of Spain, is gratefully acknowledged.

P. Rom\'an was partially supported by CONICET grant PIP 112-200801-01533 and by SeCyT-UNC.

The authors thank Paco Marcell\'an (Universidad Carlos III, Madrid) for useful and stimulating discussions on the topic and scope of this paper.

\end{document}